\newcommand{\ROT}[1]{\begin{bmatrix} 1 & 0 & 0 \\
0 & \cos{#1} & -\sin{#1} \\
0 & \sin{#1} & \cos{#1} \end{bmatrix}}
\newcommand{\D}{\mathcal{L}}
\newcommand{\R}{\mathbb{R}}
\newcommand{\SetXdelta}{\mathcal{X}_{{\delta},{\eta}}}
\newcommand{\A}[1]{\Gamma_{#1}^{0}(v_0,\rho)}
\newcommand{\AExpress}[1]{\Gamma_{#1}^{0}}
\newcommand{\B}[1]{\Gamma_{#1}^{1}(v_0,\rho)}
\newcommand{\BExpress}[1]{\Gamma_{#1}^{1}}
\newcommand{\CX}[1]{\int_{\Omega} r^2 \cos(2\theta)^2\sigma^{p}(v_0 + r\rho \cos(2 \theta)     )  \ \frac{P(r)}{\pi}  d\theta dr}
\newcommand{\IT}{I_{1:2}^{\top}}
\newcommand{\I}{I_{1:2}}
\newcommand{\Teinv}{\mathfrak{T}_t}
\DeclareMathOperator{\Sat}{cut}
\newcommand{\tun}{t_1}
\newcommand{\tdeux}{t_2}
\newtheorem{thm}{Theorem}
\numberwithin{thm}{section}
\newtheorem{Proposition}[thm]{Proposition}
\newtheorem{Lemma}[thm]{Lemma}
\newtheorem{Assumption}[thm]{Assumption}
\theoremstyle{remark}
\newtheorem{remark}[thm]{Remark}
\title{Activity estimation via distributed measurements in an orientation sensitive neural fields model of the visual cortex\thanks{This work has been partially supported by the ANR-20-CE48-0003.}}
\author{Adel Malik Annabi\footnote{Université Côte d’Azur, Inria, CNRS, LJAD, France
(email: {\tt adel-malik.annabi@inria.fr, jean-baptiste.pomet@inria.fr, ludovic.sacchelli@inria.fr})}
, Jean-Baptiste Pomet\footnotemark[2],
Dario Prandi\footnote{Université Paris-Saclay, CNRS, CentraleSupélec, Laboratoire des Signaux et Systèmes, 91190, Gif-sur-Yvette, France (email: {\tt dario.prandi@centralesupelec.fr})},
Ludovic Sacchelli\footnotemark[2]}
\begin{document}

\maketitle

\begin{abstract}

This paper investigates the online estimation of neural activity within the primary visual cortex (V1) in the framework of observability theory.
We focus on a low-dimensional neural fields modeling hypercolumnar activity to describe activity in V1. We utilize the average cortical activity over V1 as measurement. Our contributions include detailing the model's observability singularities and developing a hybrid high-gain observer that achieves, under specific excitation conditions, practical convergence while maintaining asymptotic convergence in cases of biological relevance. The study emphasizes the intrinsic link between the model's non-linear nature and its observability. 
We also present numerical experiments highlighting the different properties of the observer.

\end{abstract}

\section{Introduction} 

Having techniques to estimate online the neural activity of certain cortical areas has many potential uses. 
This can range from monitoring and treating brain health \cite{sood2016nirsParameterEst}
to applications in neuroscience, such as 
psychology \cite{astolfi2009estimation},
brain-computer interfacing \cite{babiloni2007estimationBMI},
or 
the analysis of specific neural phenomena such as visual illusions \cite{gulbinaite2017triple}. 
Recently, this has opened the door to the design of control theory inspired techniques, feedback-loop control such as in the fields brain-machine interface \cite{sorrell2021brainBMI} and deep brain stimulation \cite{herron2017cortical}. For instance, stabilization of signals in the brain via feedback control can be used to alleviate Parkinson's disease symptoms \cite{detorakis2015closed}. 
In most practical cases, neural activity can only be partially measured. It may then be crucial to be able to provide efficient and reliable online estimation methods based on measurements \cite{hu2010kalman,astolfi2009estimation,hartoyo2019parameter}.

In the present study, we focus on models where the neuronal activity is a distribution over space evolving in time according to well known neural fields equations. 
These models rely on 
integro-differential equations obtained by averaging the activity of large groups of neurons in the brain. They were introduced in the seminal works \cite{wilson1973mathematical,amari1977dynamics} and provide a powerful theoretical framework for studying brain activity. See \cite{breakspear2017dynamic} for comparison of different large scale models of neuronal activity, \cite{terry2022neural} for a review of neural fields models and \cite{bressloff2011spatiotemporal,coombes2014neural} for a more in depth analysis of these equations. 
These models have a rich history of application, particularly in the study of the primary visual cortex V1 \cite{blumenfeld2006neural,ben1995theory,bressloff2011spatiotemporal}, and in particular for explaining visual illusions \cite{bertalmio2020visual,bertalmio2021cortical,tamekue2024reproducibility,veltz2010illusions}. They have also demonstrated the capability to replicate numerous phenomena observed in experimental data, often acquired through voltage-sensitive dye (VSD) imaging, see \cite{blumenfeld2006neural,markounikau2010dynamic,o2018interhemispheric} for example. 

The focus of this paper is on state estimation for a low dimensional model of V1 introduced in \cite{blumenfeld2006neural}. This particular model incorporates many characteristic properties of the \emph{ring model} introduced in \cite{ben1995theory}  and demonstrates a noteworthy ability to accommodate qualitatively for experimental data, on orientation and selectivity, as shown in \cite{o2018interhemispheric}.
To reflect the limitations of standard physical measurement in the cortex, as those obtained via electrodes, we consider distributed measurements and in particular the average voltage within the area of interest.

To achieve our state estimation objective,
we approach neural fields models as  input-output systems.  This perspective allows us to leverage the formalism of control theory, particularly the concept of \emph{observers}. Observers, in this context, are dynamical systems designed to provide online estimates online of the state. 
See \cite{bernard2022observer} for a review on observer theory for continuous dynamical system.
From the standpoint of observability, the considered model introduces distinct challenges as a non-linear system. 
Indeed, the system may not be observable at all times, due to the state crossing some singular regions. 
We propose an hybrid observer design which accounts for these difficulties.
This approach allows for practical state estimation in regions where observability is hindered, and guarantees tunable exponential asymptotic convergence for cases of biological relevance.

The application to neural fields model of observability theory is quite recent. Up to our knowledge, the only study in this direction is \cite{brivadis2022online} which proposes an adaptive observer for a certain neural fields equation. 
In that paper, the authors consider two interconnected cortical zones, one of which is thoroughly measured. Their observer design is then based on the contractivity of the dynamics of the un-measured zone and a persistence of excitation condition.

The paper is structured as follows. In Section~\ref{sec:models} we present the neural fields model of V1 that is the focus of the observer study and establish the precise 3-D model used in the sequel.
In Section~\ref{Section:Statement of the results} we present the results we obtain in the paper, first on observability properties of the system, then on our tunable observer, whose precise design is to be found in Sections \ref{Section: Observability Mapping} and \ref{Section 4}. The technical part of the paper is divided into three sections. First, Section~\ref{Section Prelemenaries} contains technical preliminaries. Then in Section~\ref{Section: Observability Mapping} we prove the main results regarding observability and give the construction of a pseudo-inverse needed in the observer. Finally in Section~\ref{Section 4} we prove the main properties of the observer. In the last section, Section~\ref{Section:Numerical Simulations}, we discuss a numerical simulation of the observer.
 
\section{Modelisation} \label{sec:models}

In this section, following the steps in \cite{blumenfeld2006neural}, we present a neural field models of V1 and the necessary steps to obtain the low dimensional model on which the observer is constructed.

\subsection{General model on V1} 
The goal of the present study is to estimate the post-membrane potential 
over the visual cortex V1 represented by a bounded open set $\hat{\Omega}\subset \R^2$. We denote
by $V(x,t)\in \R$ the average post-membrane potential at a point $x\in \hat{\Omega}$ at a time $t$.
The evolution of $V$ is modeled by a single-layer neural field equation (see \textit{e.g.}, \cite{terry2022neural})
\begin{equation}
\label{GeneralEquation} 
    \begin{cases}
    \tau \partial_{t}V(x,t)=- V(x,t) +  J\cdot \sigma( V(\cdot,t))(x)  +  I_{\text{ext}}(x,t), &\\
    V(0)\in L^2(\hat\Omega).
    \end{cases}
\end{equation}
Here, $I_{\text{ext}}\in L^{\infty} (\mathbb{R},L^2(\hat\Omega))\cap C^{0}(\mathbb{R},L^2(\hat\Omega))$ denotes the external input, 
$\tau>0$  the temporal synaptic constant. The operator $J$ models the neuronal interaction and is given by an integral kernel $j(\cdot,\cdot)\in L^\infty(\hat\Omega\times \hat\Omega)$ through the expression
\begin{equation} \label{Introduction J}
 J\cdot u (x) = \int_{\hat{\Omega}} j(x,y)u(y)dy, \qquad u\in L^2(\hat\Omega).
\end{equation}
The firing rate function $\sigma:\mathbb{R}\to\mathbb{R}$ models how a population ``charges'' or ``discharges'' and is typically chosen to be a smooth sigmoidal function. 
Equation~\eqref{GeneralEquation} is well defined and has a unique global solution for each initial condition $V(\cdot, 0) \in  L^2({\hat{\Omega}})$ which has the same regularity as $I_{\text{ext}}$. See \cite{bressloff2011spatiotemporal,veltz2010local} for instance.

In this work we will assume\footnote{See Appendix~\ref{Appendix:Modelisation adjustement.}, for a discussion on the adjustments required to adapt the ensuing ensuing analysis to the case of strictly positive sigmoids and/or the presence of thresholds $h_0$, i.e., $\sigma(\cdot-h_0)$.} $\sigma$ to satisfy the following.
\begin{Assumption}
\label{ass:sigma}
The function $\sigma$ is a $C^{\infty}(\R,\R)$ function that is odd, strictly increasing (i.e., $\sigma'>0$), convex-concave (i.e.,  $x\sigma''(x)\leq 0$ for all $x\in \R$), and such that
\begin{equation} \label{Assumption Sigma}
    \lim\limits_{\substack{x \to -\infty}} \sigma(x)=  -1, \quad
    \sigma(0) = 0, \quad
    \lim\limits_{\substack{x \to +\infty}} \sigma(x)=  1, \quad 
    \quad \max_{\R} \sigma'=\sigma'(0).
\end{equation}    
\end{Assumption}

We are interested in the observability and observer design problem for solutions of \eqref{GeneralEquation}.
As mentioned, we consider as measurement the cortical activity averaged over the whole domain, as represented by the function 
\begin{equation}
    \label{eq:measurement-h}
    h(V(\cdot,t))=\int_{ \hat{\Omega}} V(x,t) \,dx.
\end{equation}

In the context of our study, the observability problem pertains to determining whether the internal state of a neural field, described by \eqref{GeneralEquation}, can be fully inferred from the cortical activity measurements averaged over the domain, as defined by \eqref{eq:measurement-h}. Formally, this problem explores the feasibility of reconstructing the neural field's dynamic state $ V(x,t)$ solely from the output $y(t)$. The observer design problem involves creating a system to estimate the internal state of a model based on the output $y$.

The non-linearity of the sigmoid function $\sigma$ plays a crucial role in the observability of the system. In the hypothetical case where $\sigma$ was linear, the dynamics of $ t \mapsto h \circ V(\cdot, t) $ will feature only the compositions of $ h \circ V(\cdot, t) $ and the system's inherent constants.
rendering Equation~\eqref{GeneralEquation} non-observable. It is precisely the non-linear characteristics of $\sigma$ that enable us to differentiate between potential solutions.

\begin{remark}
    Equation \eqref{GeneralEquation} is what is called in the literature a voltage based model, whereas the original models in \cite{ben1995theory} and \cite{blumenfeld2006neural} are activity based models.
    We find this formulation more convenient for mathematical analysis. 
    Moreover, as we can always transition from an activity based model to a voltage-based one via an adequate change of variables, our results can be applied to the former. See Appendix~\ref{Appendix:Modelisation adjustement.}.
\end{remark}

\subsection{Finite-dimensional reduction in V1}
From~\cite{blumenfeld2006neural}, since the experimental solutions tends to be unimodal, we follow the steps introduced in the article to obtain a reduced model. 
We are interested in two features encoded by neurons in V1: orientation $\theta \in [-\frac{\pi}{2}, \frac{\pi}{2})$ and selectivity preference $r \in [0,\infty)$  \cite{hubel1962receptive}.
Mainly, we assume that there exists a polar mapping $\Phi:x  \in \hat\Omega \mapsto (r_x,\theta_x)\in \Omega:=[0,+\infty)\times [-\pi/2,\pi/2)$. See Figure~\ref{fig:SelectivityOrientation}. Henceforth, we will work on \eqref{GeneralEquation} in the coordinates induced by $\Phi$.
 
\begin{figure}[t]
    \centering
    \begin{subfigure}[t]{0.45\textwidth}
        \vphantom{\includegraphics[width=\textwidth]{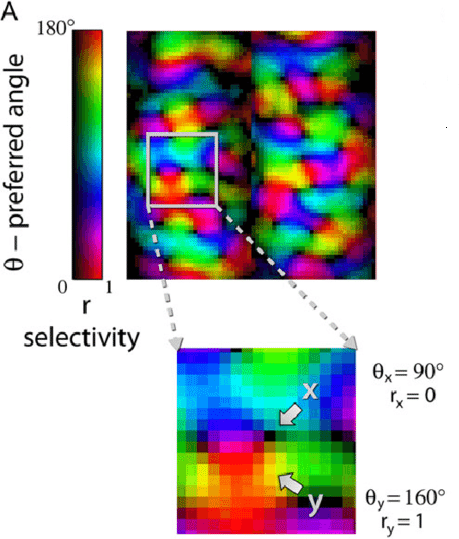}}
        \includegraphics[width=\textwidth ]{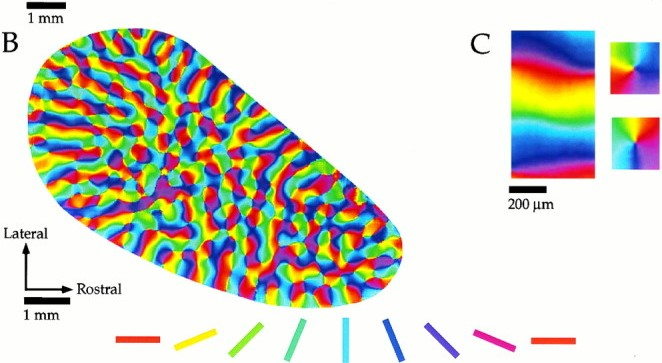}
        
        \caption{Comprehensive orientation preference (OP) map in V1 visualized through optical imaging, depicting the spatial distribution of orientation-selective neurons.
        See \cite{bosking1997orientation}.}
        \label{fig:PolarMapping}
    \end{subfigure}
    \hspace{.05\textwidth}
    \begin{subfigure}[t]{0.4\textwidth}
        \includegraphics[width=\textwidth]{PolarMappingSO}
        \caption{Orientation preference and selectivity mapping in the visual cortex V1, showing angular preference (top) and selectivity index (bottom) with key orientation angles highlighted. Here the values of the selectivity have been normalized. See \cite{o2018interhemispheric}.}
        \label{fig:SelectivityOrientation}
    \end{subfigure}

      \caption{Visual cortical mapping of orientation preference and selectivity in V1 region: (a) illustrates a comprehensive view of orientation-selective neuronal distribution through optical imaging, and (b) presents a detailed analysis of orientation preference and selectivity, with emphasis on the normalized selectivity index and key angular orientations.}
    \label{fig:combinedFigure}
\end{figure}

The polar mapping $\Phi$ induces a change of measure: 
\[
dx=P(r,\theta)drd\theta,
\]
where $P(r,\theta)= c\, |\!\det (\partial_{r,\theta} \Phi^{-1}) |/|\hat\Omega|$ and $c>0$ is a normalization constant making $P$ a probability density. Following \cite{ben1995theory} and \cite{blumenfeld2006neural}, we suppose that the distribution over orientation preference is uniform, i.e that cortex handle all orientations similarly, obtaining then that $P(r,\theta)= \frac{P(r)}{\pi}$. 
Since neuronal selectivity ranges in a finite interval, we henceforth assume $P$ to have compact support.

 Following the steps in \cite{blumenfeld2006neural} and \cite{veltz2010local},  we look for an operator $J$ in~\eqref{Introduction J} such that the experimentally observed cortical stationary states at rest ($I_{\text{ext}}=0)$ are in its range. Since these stationary states tends to be unimodal i.e. a single dominant fourier mode, we can then well approximate the connectivity kernel by :
\begin{equation}
    j(x,y)=J_0 + J_1 r_x r_y \cos(2(\theta_x-\theta_y)), \quad J_1 > 0, \quad  J_0 \neq 0.
\end{equation}
See~\cite{blumenfeld2006neural} and \cite{veltz2011nonlinear}[Chapter 10].
Here, the parameters $J_0$ and $J_1$ encode, respectively, the strength of global inhibition or excitation and the strength of the connectivity.
As $J$ is a compact self-adjoint operator, using that for any $U(\cdot,t) \in L^2(\hat \Omega)$, $U=U^{\parallel} + {U}^{\perp}$, where $U^{\parallel} \in \text{range}( J)$, Equation~\eqref{GeneralEquation} splits into
\begin{equation}
\left\{
 \begin{array}{l}
\tau  \dot{V}^{\parallel}=- V^{\parallel} + J\cdot \sigma(V^{\parallel} + V^{\perp}) \ + I^{\parallel} ,     \\
  \tau \dot{V}^{\perp}=- V^{\perp} + I^{\perp},
\end{array}\right.
\end{equation}
with
\begin{equation}\label{eq: V par}
V^{\parallel}(r_x,\theta_x,t)=v_0(t) + v_1(t) r_x \cos(2\theta_x) + v_2(t) r_x \sin(2\theta_x).    
\end{equation}
Setting the input such that $I^{\perp}$ is negligible, $V^{\perp}$ converges exponentially towards 0. We henceforth assume $I^{\perp}=0$ and set $V=V^{\parallel}$.

Finally, observe that the measurement $h$ is recast in the following form
    \begin{equation}
    h(V(\cdot,t))= v_0(t),
    \end{equation}

\begin{remark}
    Observe that we can also obtain the same measurement by a well placed electrode centered around a pinwheel, i.e. any point $x_0\in \R^2$ that verifies $r_{x_0}=0$. Indeed we have using \eqref{eq: V par}
    \begin{equation}
     \langle V(\cdot,t), \delta_{x_0} \rangle=v_0(t).
    \end{equation}
\end{remark}

\subsection{Main model of study}

The observations of the previous section allow us to reduce the infinite dimensional equation \eqref{GeneralEquation} to the following finite dimensional ODE for $v=(v_0,v_1,v_2)\in \mathbb{R}^3$ 
\begin{equation}\label{EqP}  \left\{
 \begin{array}{ll}
    \tau \dot{v}_{0} \  = \  -  v_{0} \ +\  J_0\int_{\Omega} \sigma( V(r,\theta,v)  )  \frac{P(r)}{\pi} d\theta dr \ +\  I_0(t)&:=f_0(v,I(t)),  \\
    \tau \dot{v}_{1} \ = \ - v_{1} \ + \  J_1\int_{\Omega} r \cos(2\theta)\sigma( V(r,\theta,v)    )  \ \frac{P(r)}{\pi}  d\theta dr  +\ I_{1}(t)&:=f_1(v,I(t)), \\
   \tau  \dot{v}_{2} \ = \ - v_{2} \ +  \   J_1 \int_{\Omega}  r\sin(2\theta)\sigma( V(r,\theta,v)   )  \ \frac{P(r)}{\pi}  d\theta dr  + \  I_{2}(t)&:=f_2(v,I(t)).
 \end{array}\right.
\end{equation}
Here, $J_0$ and $J_1$ are real non-zero parameters of the system, $I=(I_0,I_1,I_2)$ is the external input, which we assume to be at least continuous,
$P(\cdot)$ is a compactly supported probability density over $[0,+\infty)$, $\Omega = [0,+\infty)\times [-\pi/2,\pi/2)$, and $V(r,\theta,v)$ is the neuronal activity given by
\begin{equation}                        
V(r,\theta,v)\ = \ v_0  \ + \ rv_1\cos(2\theta) \ + \ rv_2 \sin(2\theta).
\end{equation}
The ouptut is, at each time, the space average of this neuronal activity, hence $y=h\circ v$ with $h:\R^3\to\R$ defined by
\begin{equation}\label{EqP-y}
 h(v_0.v_1,v_2)=v_0\,,\quad\text{i.e.}\ y=v_0\,.
\end{equation}

As $\tau >0$ does not interfere in our study of the observability, we set $\tau=1$ from here to ease the proofs.

Concerning the existence and the maximal bound of the solutions, we have the following proposition, the proof of which is found in Section~\ref{Section Prelemenaries}
\begin{Proposition} \label{Solution defined Bounded}
Assume $I\in C^0([0,+\infty))$. The dynamics \eqref{EqP} admit a unique global solution $v$ defined on $\R$ for every initial condition $v(0)\in \R^3 $. Moreover, if $\sup_t ||I|| < \infty$, then there exist $R^* > 0$, such that for any $R \geq R^*$, $B_{\R^3}(0,R)$ is an invariant attracting set by the dynamic $f$.
\end{Proposition}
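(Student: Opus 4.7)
The plan is to establish the two assertions in turn, first using Cauchy--Lipschitz for well-posedness, then a Lyapunov-type estimate for the asymptotic bound.

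For existence and uniqueness, I would first show that the right-hand side of \eqref{EqP} is globally Lipschitz in $v$, uniformly in $t$. By Assumption~\ref{ass:sigma}, $\sigma$ is globally $\sigma'(0)$-Lipschitz, since $\sigma'(0)=\max_{\R}\sigma'$. Because $v\mapsto V(r,\theta,v)$ is affine with gradient $(1, r\cos 2\theta, r\sin 2\theta)$, and $P$ is supported in some $[0,R_P]$, the composition $v\mapsto \sigma(V(r,\theta,v))$ is Lipschitz in $v$ with constant uniformly bounded by $\sigma'(0)\sqrt{1+R_P^2}$ over the support of $P$. Integration against $P(r)/\pi$ preserves this bound, so each $f_i(\cdot,I(t))$ is globally Lipschitz in $v$ with constant independent of $t$. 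Combined with the continuity of $I$, Cauchy--Lipschitz yields local existence and uniqueness. To extend to a global solution, I would observe that $|\sigma|\leq 1$ and the compact support of $P$ give a uniform bound $|N(v)|\leq M$ on the nonlinear integral component of $f$ (with $M$ depending only on $J_0$, $J_1$, and the first moment of $P$); hence $|f(v,I(t))|\leq |v|+M+|I(t)|$, so solutions grow at most exponentially on bounded time intervals and cannot blow up in finite time.

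For the invariant attracting ball, I would use the candidate Lyapunov function $W(v)=\tfrac{1}{2}|v|^2$. A direct computation using \eqref{EqP} gives
\begin{equation}
\dot W = -|v|^2 + v\cdot N(v) + v\cdot I(t) \;\leq\; -|v|^2 + |v|\bigl(M + \|I\|_\infty\bigr),
\end{equation}
where $\|I\|_\infty:=\sup_t|I(t)|<\infty$ by hypothesis. Setting $R^* := M + \|I\|_\infty$, whenever $|v|>R^*$ the right-hand side is strictly negative, so any ball $B_{\R^3}(0,R)$ with $R\geq R^*$ is forward-invariant. Moreover, for $|v|\geq R^*+\epsilon$ with $\epsilon>0$ one obtains $\tfrac{d}{dt}|v|\leq -\epsilon/2$, so every trajectory enters $B_{\R^3}(0,R^*+\epsilon)$ in finite time, and then remains inside by invariance, proving attractivity.

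The argument is essentially routine; the only point requiring care is verifying the uniform bound $M$ on the integral nonlinearities, which reduces to combining the boundedness of $\sigma$ with the compact support of $P$ and the normalization $\int\frac{d\theta}{\pi}=1$ on $[-\pi/2,\pi/2)$.
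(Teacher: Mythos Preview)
Your proposal is correct and follows essentially the same route as the paper: both use the global Lipschitz property of $\sigma$ (with constant $\sigma'(0)$) together with the compact support of $P$ to get global well-posedness via Cauchy--Lipschitz, and both use the Lyapunov function $\tfrac{1}{2}|v|^2$ together with the bound $|\sigma|\le 1$ to obtain $\dot W \le -|v|^2 + C|v|$ and conclude the existence of an invariant attracting ball. Your argument is slightly more explicit about the attractivity step, but otherwise the two proofs coincide.
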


\begin{remark}
    In the case where $P$ is a Dirac mass,
    \eqref{EqP} reduces to the ring model introduced in \cite{ben1995theory}. 
    Observe also that the assumption of compact support for $P$ is not essential. Indeed, our analysis applies as soon as $P$ admits moments up to the second order.
\end{remark}

\paragraph{Notation.}
We denote any $X\in\R^3$ as $X=(X_0,X_1,X_2)^\top$. Moreover, we let $X_{1:2}\in \R^2$ be the vector composed of the two last elements, i.e., $X_{1:2}=(X_1,X_2)^\top$.  We denote also by $f(v,I(t))=(f_0(v,I(t)),f_1(v,I(t)),f_2(v,I(t)))$ the right hand side of equation~\eqref{EqP}.

\section{Statement of the results} \label{Section:Statement of the results}

In this section, we present the results achieved in the paper. In a first step we discuss the dynamical properties of the system in connection with observability of the state. Then we propose an estimation method and present its convergence properties under different assumptions. 

\subsection{Observability} 
\label{sec:observability}

Here, we investigate the observability properties of \eqref{EqP}-\eqref{EqP-y}.
We first consider the simplest notion of observability, namely to what extent an output $t\mapsto y(t)$, for a prescribed input $t\mapsto I(t)$, can be produced by only one solution $t\mapsto v(t)$.
This is called (instantaneous) observability in 
\cite[Section~3.1.2]{bernard2022observer} or \cite[Chap.\ 2, Defn.\ 1.2]{gauthier_kupka_2001}.

The following preliminary proposition states that observability \emph{does not} hold when $v_0$ is zero and gives, away from $\{v_0=0\}$, the ``maximum information'' that can be extracted from the knowledge of the output $y(.)$ without any assumption on the input $I(\cdot)$. 

\begin{Proposition} \label{Observability rho}
We have the following.
\begin{enumerate}

\item[(i)] Let  $I\in C^0([0,\infty), \mathbb{R}^3)$ such that $I_0=0$. The output $h\circ v=v_0$ of any solution $v$ of \eqref{EqP} with initial condition $v(0)=(0,v_1(0),v_2(0) )^{\top} \in \R^3 $ is identically null. In this particular case, there exist $v$ and $\hat v$ solutions of \eqref{EqP} such that
\begin{equation}
    h(v(t))=h(\hat v(t)) \ \text{and} \ v(t) \neq \hat{v}(t), \ \forall t \ge 0. 
\end{equation}

\item[(ii)] Let $v$ and $\tilde v$ be two solutions of \eqref{EqP} such that $h(v(t)) = h (\tilde v(t))$ for all $t$ in an interval $[0,t^*]$, for some $t^*>0$. Then, for any time $t'\in[0,t^*]$ such that $h(v(t'))\neq0$ we have
\begin{equation}
    \label{obs-faible}
    |v_{1:2}(t')|=|\tilde{v}_{1:2}(t')|
    \ \text{and}\ 
    v(t')_{1:2}^{\top} I_{1:2}(t') = \tilde v(t')_{1:2}^{\top} I_{1:2}(t').
\end{equation}

\end{enumerate}

\end{Proposition}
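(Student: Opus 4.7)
The plan is to treat the two parts separately, with part (i) relying on oddness of $\sigma$ and part (ii) on a strict-monotonicity argument for an integral depending on $|v_{1:2}|$. Throughout, I use that shifts such as $\theta\mapsto\theta+\pi/2$ (which sends $\cos(2\theta)$ to $-\cos(2\theta)$) and $\theta\mapsto\theta-\phi/2$ (which turns $v_1\cos(2\theta)+v_2\sin(2\theta)$ into $\rho\cos(2\theta)$, where $\rho:=|v_{1:2}|$ and $\phi$ is its polar angle) preserve the $\theta$-integration range modulo $\pi$.

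For part (i), when $v_0=0$ the argument of $\sigma$ in the first integral of \eqref{EqP} is $rv_1\cos(2\theta)+rv_2\sin(2\theta)$, whose sign is flipped by $\theta\mapsto\theta+\pi/2$. Since $\sigma$ is odd, the integral equals its own negative and so vanishes. With $I_0\equiv 0$ and $v_0(0)=0$, the first equation of \eqref{EqP} collapses to $\dot v_0=-v_0$ on $\{v_0=0\}$, so $v_0\equiv 0$ by ODE uniqueness. To exhibit two distinct solutions producing the same output, pick any $v(0),\hat v(0)\in\{v_0=0\}$ with $v_{1:2}(0)\neq\hat v_{1:2}(0)$: both yield $y\equiv 0$, and ODE uniqueness (applied forward and backward in time, possible since $f$ is smooth and solutions global by Proposition~\ref{Solution defined Bounded}) prevents $v(t)=\hat v(t)$ at any $t\geq 0$.

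For part (ii), the rotation $\theta\mapsto\theta-\phi/2$ shows that the first integral in \eqref{EqP} is $\Sigma(v_0,\rho):=\int_\Omega \sigma(v_0+r\rho\cos(2\theta))\frac{P(r)}{\pi}\,d\theta\,dr$, depending only on $(v_0,\rho)$. Equating $\dot v_0=\dot{\tilde v}_0$ on $[0,t^*]$ and using $v_0=\tilde v_0$ reduces to $\Sigma(v_0(t),\rho(t))=\Sigma(v_0(t),\tilde\rho(t))$. The core step is the strict monotonicity of $\rho\mapsto\Sigma(v_0,\rho)$ on $[0,\infty)$ for $v_0\neq 0$. Pairing $\theta$ with $\theta+\pi/2$ in $\partial_\rho\Sigma$ gives
\begin{equation*}
    \partial_\rho\Sigma(v_0,\rho) = \int_0^{+\infty}\!\!\int_{-\pi/4}^{\pi/4} r\cos(2\theta)\bigl[\sigma'(v_0+r\rho\cos(2\theta))-\sigma'(v_0-r\rho\cos(2\theta))\bigr]\frac{P(r)}{\pi}\,d\theta\,dr,
\end{equation*}
and Assumption~\ref{ass:sigma} (which makes $\sigma'$ even from $\sigma$ odd, and non-increasing in $|x|$ from $x\sigma''\leq 0$ together with $\max\sigma'=\sigma'(0)$) yields $\sigma'(v_0+u)\leq\sigma'(v_0-u)$ for $v_0>0,\,u\geq 0$, hence a non-positive integrand. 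The case $v_0<0$ follows from the symmetry $\Sigma(-v_0,\rho)=-\Sigma(v_0,\rho)$ (obtained again via $\sigma$ odd and a $\theta+\pi/2$ shift). Hence $\rho(t')=\tilde\rho(t')$ whenever $v_0(t')\neq 0$, which is the first identity in \eqref{obs-faible}.

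For the second identity, I differentiate $\rho^2=v_1^2+v_2^2$ along \eqref{EqP}. Applying the rotation $\theta\mapsto\theta-\phi/2$ to $v_{1:2}^\top\dot v_{1:2}$ collapses the resulting vector integral to a scalar depending only on $(v_0,\rho)$, giving an ODE of the form $\frac{1}{2}\frac{d}{dt}(\rho^2)=-\rho^2+J_1\rho\,\Psi(v_0,\rho)+v_{1:2}^\top I_{1:2}$, with $\Psi(v_0,\rho):=\int_\Omega r\cos(2\theta)\sigma(v_0+r\rho\cos(2\theta))\frac{P(r)}{\pi}\,d\theta\,dr$. Continuity of $v_0$ extends $v_0(t')\neq 0$ to a neighborhood of $t'$ in $[0,t^*]$ on which $v_0=\tilde v_0$ and $\rho=\tilde\rho$, so $\frac{d}{dt}(\rho^2)$ and $\frac{d}{dt}(\tilde\rho^2)$ agree at $t'$; subtracting the two scalar identities cancels everything except $v_{1:2}^\top I_{1:2}(t')-\tilde v_{1:2}^\top I_{1:2}(t')$, which must therefore vanish. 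The main technical obstacle is the strict monotonicity step: the non-strict inequality is immediate from the convex-concave shape, but upgrading it to strict requires the combined use of $x\sigma''\leq 0$ and $\max\sigma'=\sigma'(0)$ to prevent $\sigma'$ from being locally constant on a region of positive $(r,\theta)$-measure.
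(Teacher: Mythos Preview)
Your proof is correct and follows essentially the same route as the paper: the paper packages your function $\Sigma$ and its $\rho$-derivative as $\Gamma_0^0$ and $\Gamma_1^1$ in a preliminary lemma, proves $\Gamma_0^0(0,\rho)=0$ and the strict monotonicity of $\Gamma_0^0(v_0,\cdot)$ for $v_0\neq 0$ via the same convex--concave pairing argument you use, and then derives \eqref{obs-faible} by differentiating $\rho=\tilde\rho$ in the polar form of the dynamics, exactly as you do via $\tfrac{d}{dt}\rho^2$. Your closing remark about the strictness step is apt --- the paper simply asserts that $\sigma'$ is strictly monotone on each half-line from Assumption~\ref{ass:sigma}, so your treatment is at least as careful.
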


Let us now state two more precise results that elaborate on Points~\textit{(i)} and \textit{(ii)} above, starting with
Point~\textit{(i)}, that tells us that observability does not hold in the set $\mathcal{Z}_0 := \{ v \in \R^3:\: v_0 = 0\}$, and that $\mathcal{Z}_0$ is invariant by the dynamics if the first component of the input is identically zero.
Importantly, as the output under consideration is $v_0$, the value of the output tells us how close the state $v$ is from $\mathcal{Z}_0$.
Since we need to build an observer that tolerates trajectories passing through $\mathcal{Z}_0$, Proposition~\ref{Proposition : Passage} below, proved in Section~\ref{Section 3.2}, states that, under the assumption that the first component of the input is bounded from below, trajectories may pass through $\mathcal{Z}_0$ only once and gives an estimate of the time a solution passes ``close to'' that set.
Note that this assumption is often made in the literature, see \textit{e.g.} \cite{veltz2010local}.

\begin{Proposition}
\label{Proposition : Passage}
Consider an input $I:[0,+\infty)\to\R^3$ such that $I_0(t)\ge c>0$ for all $t \geq 0$, and a solution $v(\cdot)$ of \eqref{EqP} associated to this input.
Let 
\begin{equation}\label{eq:deltastar}
    \delta^*=\frac{c}{1+|J_0|\sigma'(0)},
\end{equation}
and fix some $\delta$, $0<\delta<\delta^*$.
The first component $v_0(t)$ of the solution vanishes at most one time, and the set of times $t$ such that 
$|v_0(t)|\leq\delta$ is a time-interval of length no 
larger than $t_\delta$ given by
\begin{equation}\label{eq:tdelta}
 t_\delta= \frac{\delta^*}c \frac{2\delta}{\delta^*-\delta}\,.
\end{equation}
  Moreover, if there exist a time $t^* \geq 0$ such that $v_0(t^*) \geq \delta$, then $v(t) \geq \delta$ for all times $t\geq t^*$.
\end{Proposition}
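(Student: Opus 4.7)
The plan is to reduce all three claims to a single pointwise lower bound for $\dot v_0$ that is independent of $(v_1,v_2)$, obtained by exploiting the oddness of $\sigma$ together with the bound $\sigma'\le\sigma'(0)$ from Assumption~\ref{ass:sigma}.

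The first step is the identity
\[
\int_{-\pi/2}^{\pi/2}\sigma\!\bigl(rv_1\cos(2\theta)+rv_2\sin(2\theta)\bigr)\,d\theta=0
\]
valid for every $(v_1,v_2)\in\R^2$ and every $r\ge 0$. Rewriting $rv_1\cos(2\theta)+rv_2\sin(2\theta)=A\cos(2\theta-\phi_0)$ with $A=r\sqrt{v_1^2+v_2^2}$ and substituting $\psi=2\theta-\phi_0$, the integral equals $\tfrac12\int_{-\pi}^{\pi}\sigma(A\cos\psi)\,d\psi$, which vanishes via $\psi\mapsto\psi+\pi$ combined with $\cos(\psi+\pi)=-\cos\psi$ and $\sigma(-x)=-\sigma(x)$. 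Consequently the $\sigma$-integral in the equation for $\dot v_0$ is zero on the slice $\{v_0=0\}$. Using the mean value theorem and $0<\sigma'\le\sigma'(0)$ to estimate the difference with the general case,
\[
\Bigl|\,J_0\!\int_{\Omega}\sigma\!\bigl(V(r,\theta,v)\bigr)\,\tfrac{P(r)}{\pi}\,d\theta\,dr\,\Bigr|\le|J_0|\sigma'(0)\,|v_0|,
\]
so that combining with $-v_0$ and $I_0(t)\ge c$ yields
\[
\dot v_0\;\ge\;c-(1+|J_0|\sigma'(0))\,|v_0|\;=\;(1+|J_0|\sigma'(0))\,(\delta^*-|v_0|).
\]

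From this scalar inequality all three conclusions follow by elementary one-dimensional comparison. Whenever $|v_0|\le\delta<\delta^*$ one has $\dot v_0\ge(1+|J_0|\sigma'(0))(\delta^*-\delta)>0$, so $v_0$ cannot cross the level $\delta$ downward, giving the forward invariance of $\{v_0\ge\delta\}$ (last assertion). The same strict positivity makes $v_0$ strictly increasing on $T:=\{t:|v_0(t)|\le\delta\}$: hence $T$ is an interval (entered only through $-\delta$, exited only through $+\delta$), and its length is at most $\tfrac{2\delta}{(1+|J_0|\sigma'(0))(\delta^*-\delta)}=t_\delta$. Finally, at any zero of $v_0$ the inequality gives $\dot v_0\ge c>0$, so a second zero would force $v_0$ to re-cross some level $\delta\in(0,\delta^*)$ downward, which is again ruled out; hence $v_0$ vanishes at most once.

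The only genuinely nontrivial step is the angular cancellation at $v_0=0$; once the polar rewriting of $rv_1\cos(2\theta)+rv_2\sin(2\theta)$ is set up, everything else is routine scalar ODE comparison. The point that requires a small amount of care is arguing that $T$ is actually a connected interval rather than merely a set of small total length, but this follows immediately from the fact that a gap in $T$ would force $v_0$ to exit and re-enter $[-\delta,\delta]$ through one of its boundary levels, in contradiction with the strict positivity of $\dot v_0$ there.
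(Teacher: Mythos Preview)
Your proof is correct and follows the same overall strategy as the paper: both reduce to the scalar differential inequality $\dot v_0 \ge c-(1+|J_0|\sigma'(0))|v_0|$ and then read off all three claims by elementary one-dimensional comparison, with the same time bound $t_\delta$.

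The only (minor) difference is in how the bound on the nonlinear integral is obtained. The paper argues that $|\Gamma_0^0(v_0,\rho)|\le|\Gamma_0^0(v_0,0)|=|\sigma(v_0)|$ by monotonicity in $\rho$, which uses the sign of $\partial_\rho\Gamma_0^0=\Gamma_1^1$ and hence the convex--concave structure of $\sigma$; you instead use the angular cancellation $\Gamma_0^0(0,\rho)=0$ (oddness of $\sigma$) together with the mean value theorem in $v_0$ and $\sigma'\le\sigma'(0)$. Your derivation is thus slightly more self-contained for this proposition, as it does not call on the convex--concave hypothesis, while the paper's version yields the sharper intermediate inequality $|\Gamma_0^0(v_0,\rho)|\le|\sigma(v_0)|$ that it reuses elsewhere.
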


\begin{remark}
    If the parameter $J_0$ is negative and $I_0(t)>c>0$ the flow of $v_0$ is always positive when $v_0$ takes negative values. The proposition above is there solely to treat the case $J_0>0$. 
\end{remark}

Point~\textit{(ii)} in Proposition~\ref{Observability rho} states partial observability; it is completed by Proposition~\ref{Observability Theorem} below under proper conditions on $I_{1:2}$.
\begin{Proposition}\label{Observability Theorem}
    Consider an input $I:[0,+\infty)\to\R^3$ such that $I_0(t)\ge c>0$ for all $t \geq 0$.
    For any $t^*>0$, the two following assertions are equivalent
    \begin{enumerate}[(i)]
        \item For any two solutions $v$ and $\tilde{v}$ of \eqref{EqP} associated to this input, 
        \begin{equation}
            h(v(t))=h(\tilde{v}(t)) \quad \forall t\in [0,t^*] \quad \implies \quad  v(t)=\tilde{v}(t) \quad \forall t\in [0,\infty).
        \end{equation}
        \item  There exists $t\in [0,t^*]$ such that $I_{1:2}(t) \wedge \dot{I}_{1:2}(t)\neq 0$.
    \end{enumerate}
Here, $a\wedge b$ denotes the determinant of the matrix $[a;b]$ for any vector $a,b$ of $\R^2$
\end{Proposition}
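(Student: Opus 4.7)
The plan is to prove $(ii)\Rightarrow(i)$ by exploiting the rotational equivariance of the $v_{1:2}$-equation to collapse the difference of two indistinguishable solutions into a scalar linear ODE, and to prove $(i)\Rightarrow(ii)$ by contraposition using a reflection symmetry of \eqref{EqP}.

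For $(ii)\Rightarrow(i)$, let $v$ and $\tilde v$ be two solutions with $h\circ v = h\circ \tilde v$ on $[0,t^*]$. Since $I_0 \ge c > 0$, Proposition~\ref{Proposition : Passage} ensures that $v_0$ vanishes at most once, so Proposition~\ref{Observability rho}(ii) combined with continuity yields
\[
|v_{1:2}(t)| = |\tilde v_{1:2}(t)|, \qquad v_{1:2}(t)^\top I_{1:2}(t) = \tilde v_{1:2}(t)^\top I_{1:2}(t), \qquad \forall t \in [0,t^*].
\]
Next, the change of variable $2\theta\mapsto 2\theta + \alpha$ in the integrals defining $f_1,f_2$ (combined with the $\pi$-periodicity of the integrand) shows that the vector field
\[
G_{1:2}(v_0,v_{1:2}) := J_1\int_\Omega r\,(\cos 2\theta,\,\sin 2\theta)^\top \sigma(V(r,\theta,v))\,\frac{P(r)}{\pi}\,d\theta\,dr
\]
is equivariant under rotations in the $(v_1,v_2)$-plane, hence of the form $\mu(v_0,|v_{1:2}|)\,v_{1:2}$ for some smooth scalar $\mu$. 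Setting $\nu := \mu - 1$, the last two components of \eqref{EqP} reduce to $\dot v_{1:2} = \nu(v_0,|v_{1:2}|)\,v_{1:2} + I_{1:2}(t)$. Because both $v_0$ and $|v_{1:2}|$ are shared by $v$ and $\tilde v$ on $[0,t^*]$, the difference $w := v_{1:2}-\tilde v_{1:2}$ solves the homogeneous scalar-linear equation $\dot w = \nu(t)\, w$, so $w(t) = \alpha(t)\, w(0)$ with $\alpha(t) := e^{\int_0^t \nu(s)\,ds} > 0$. The orthogonality $w(t)^\top I_{1:2}(t) = 0$ then simplifies to $w(0)^\top I_{1:2}(t) = 0$ for every $t\in[0,t^*]$, and differentiating yields $w(0)^\top \dot I_{1:2}(t) = 0$ as well. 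At the time $t$ supplied by (ii) the vectors $I_{1:2}(t)$ and $\dot I_{1:2}(t)$ span $\R^2$, forcing $w(0)=0$; combined with $v_0(0)=\tilde v_0(0)$, uniqueness from Proposition~\ref{Solution defined Bounded} gives $v\equiv \tilde v$ on $[0,\infty)$.

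For the converse $(i)\Rightarrow(ii)$ I would argue by contraposition: assuming $I_{1:2}\wedge \dot I_{1:2}\equiv 0$ on $[0,t^*]$ and writing $I_{1:2}=r\,u$ with $u$ unit-valued wherever $I_{1:2}\neq 0$, the identities $\dot u \perp u$ and $u\wedge\dot u=0$ force $\dot u\equiv 0$, so $I_{1:2}$ lies along a constant line $\ell=\R u$ throughout $[0,t^*]$. The reflection $R$ of $\R^2$ across $\ell$ then preserves $I_{1:2}(t)$ and, by the equivariance established above, acts as a symmetry of \eqref{EqP}; any solution with $v_{1:2}(0)\notin \ell$ therefore gives rise, via $(v_0,R\,v_{1:2})$, to a distinct solution of \eqref{EqP} sharing the same output on $[0,t^*]$, contradicting (i). The main obstacle is in this converse step: extracting a single global line $\ell$ from the pointwise condition $I_{1:2}\wedge\dot I_{1:2}=0$ is delicate when $I_{1:2}$ vanishes on sub-intervals of $[0,t^*]$, since the direction $u$ could in principle flip across zeros, and the cleanest argument either assumes mild regularity on $I$ (\emph{e.g.}\ analyticity or non-vanishing) or handles zero-crossings by a piecewise patching.
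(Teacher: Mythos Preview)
Your argument for $(ii)\Rightarrow(i)$ is correct and takes a genuinely different route from the paper. The paper proves this direction by invoking the injectivity of the observability map $T_t$ on $\R^3\setminus\mathcal{Z}_0$ (Theorem~\ref{prop:T-inj}), which in turn rests on the full analysis of the extended map $S_t$ in Section~\ref{Section: Observability Mapping}. You bypass this machinery entirely: once Proposition~\ref{Observability rho} gives $|v_{1:2}|=|\tilde v_{1:2}|$ and $I_{1:2}^\top(v_{1:2}-\tilde v_{1:2})=0$ on $[0,t^*]$, the $O(2)$-equivariance collapses the $v_{1:2}$-equation to $\dot v_{1:2}=\nu(t)v_{1:2}+I_{1:2}$ with a \emph{shared} scalar coefficient $\nu(t)$, so $w=v_{1:2}-\tilde v_{1:2}$ solves the homogeneous linear ODE $\dot w=\nu(t)w$. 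This is an elegant shortcut: it requires only $I\in C^1$ rather than the $C^2$ regularity needed to define $T_t$, and it makes the role of the condition $I_{1:2}\wedge\dot I_{1:2}\neq 0$ completely transparent (it is exactly the condition that $I_{1:2}(t)$ and $\dot I_{1:2}(t)$ span $\R^2$, forcing $w(0)=0$). The paper's approach, on the other hand, feeds directly into the observer construction, since $T_t$ and its pseudo-inverse are the central objects of Section~\ref{Section 3.3}.

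For $(i)\Rightarrow(ii)$ your reflection construction is the right mechanism, and is more explicit than the paper's own argument, which essentially asserts the conclusion. The obstacle you flag is genuine: from $I_{1:2}\wedge\dot I_{1:2}\equiv 0$ on $[0,t^*]$ one cannot in general extract a \emph{single} line $\ell$ containing $I_{1:2}(t)$ for all $t\in[0,t^*]$ if $I_{1:2}$ is allowed to vanish on subintervals with merely $C^1$ regularity. Under the standing smoothness used elsewhere in the paper (or analyticity, or the simple extra hypothesis $I_{1:2}\not\equiv 0$ on any subinterval), the direction $u$ extends continuously across isolated zeros and your argument closes; without such a hypothesis the equivalence as stated requires a patching argument that neither you nor the paper supplies. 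One minor point: the reflected trajectory $(v_0,Rv_{1:2})$ is only guaranteed to solve \eqref{EqP} on $[0,t^*]$; to contradict $(i)$ you should define $\tilde v$ as the global solution with initial datum $(v_0(0),Rv_{1:2}(0))$, which then coincides with $(v_0,Rv_{1:2})$ on $[0,t^*]$ by uniqueness.
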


The proof of the above results relies on considerations regarding a stronger form of observability on our model, often called \emph{differential obervability} \cite{bernard2022observer,gauthier_kupka_2001}, that requires that the value of a certain number of time-derivatives of the output $y=h(v(t))$ at some time $t$ uniquely determines the value of the state $v(t)$ at the same time.
The time derivative is given by the differential operator $\D$, also called ``total derivative'' along the time-varying differential equation \eqref{EqP} (see \textit{e.g.}, \cite{bernard2022observer}), that maps a real-valued function $g \in C^{k}(\mathbb{R}^3\times \mathbb{R})$  to 
$ \D g \in C^{k-1}(\mathbb{R}^3\times \mathbb{R}, \R)$ defined by
\begin{equation}  \label{eq: operator differentiel D}
\D g(v,t)=f_0(v,I(t))\partial_{v_0}g(v,t)+f_1(v,I(t))\partial_{v_1}g(v,t)+f_2(v,I(t))\partial_{v_2}g(v,t)+\partial_{t}g (v,t).
\end{equation}
Note that the input $I(\cdot)$ is fixed and sufficiently smooth; a different $I(\cdot)$ defines a different differential operator $\D$.
Hinting that the number of time-derivatives needed is no larger than three, we study the time-dependant mapping $v\mapsto T_t(v)$ defined, for all $t$ in $[0,+\infty)$, by
\begin{equation}\label{eq:Introduction T}
T_t(v)=
\begin{pmatrix}
h(v) \\ \D h\,(v,t) \\ \D^2h\,(v,t) \\ \D^3h\,(v,t)
\end{pmatrix},
\end{equation}
The map $T_t$ is often called the observability mapping at time $t$, indeed its injectivity implies the aforementioned differential observability.
The following proposition is our main result on differential observability of \eqref{EqP}-\eqref{EqP-y}. It is proved in Section~\ref{Section 3.2}.
Define the sets
\begin{equation}
    \label{setsZ}
    \mathcal{Z}_0=\{v \in \R^3 : v_0=0 \},\quad
    \mathcal{Z}_{1:2}=\{v\in \R^3\mid v_1=v_2=0\},\quad
    \mathcal{Z}:=\mathcal{Z}_{1:2} \cup \mathcal{Z}_0\,.
\end{equation}
\begin{thm}
    \label{prop:T-inj}
    Assume $I\in C^2([0,+\infty),\R^3)$. Let $t\ge 0$ be such that $I_{1:2}(t)\wedge \dot I_{1:2}(t)\neq 0$. \begin{itemize}
        \item The map $T_t :\R^3 \setminus \mathcal{Z}_0\to\R^4$ is an injection.
        \item The map $T_t:\R^3 \setminus \mathcal{Z}\to\R^4$ is an injective immersion.
    \end{itemize}
\end{thm}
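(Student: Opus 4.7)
The plan is to exploit the rotational symmetry of \eqref{EqP} in the $(v_1,v_2)$-plane to rewrite $T_t$ in terms of a few natural invariants, then to invert these one at a time; the central analytic input is the strict monotonicity of a certain scalar integral.

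Writing $\ai=\rho(\cos\phi,\sin\phi)^{\top}$ and applying the substitution $\theta\mapsto \theta+\phi/2$ in the integrals defining $f$, the dynamics take the reduced form $f_0(v,I(t))=-v_0+J_0\, a(v_0,\rho)+I_0(t)$ and $f_{1:2}(v,I(t))=-\ai+J_1\, b(v_0,\rho)\,\ai/\rho+\I(t)$, with scalar functions $a$ and $b$ depending only on $(v_0,\rho)$. Introducing the scalars $\alpha(t):=\ai^{\top}\I(t)$ and $\beta(t):=\ai^{\top}\dot\I(t)$, iterating the operator $\D$ on $h=v_0$ via the chain rule shows that $\D h$ is a function of $(v_0,\rho,t)$; that $\D^2 h$ depends additionally on $\alpha$, the dependence being linear with coefficient $J_0\partial_\rho a/\rho$; and that $\D^3 h$ depends additionally on $\beta$, again linearly and with the same coefficient $J_0\partial_\rho a/\rho$ (coming from $\partial_\alpha(\D^2 h)\cdot \partial_\beta\dot\alpha$).

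The key analytic step is to show that $\partial_\rho a(v_0,\rho)\neq 0$ whenever $v_0\neq 0$ and $\rho>0$. Writing $\partial_\rho a(v_0,\rho)=\int_\Omega r\cos(2\theta)\,\sigma'(v_0+r\rho\cos(2\theta))\,\frac{P(r)}{\pi}d\theta\, dr$ and pairing $\theta\in[\pi/4,\pi/2]$ with its image under $\theta\mapsto \pi/2-\theta$ recasts the integral as a positive-weighted combination of terms of the form $\sigma'(v_0+s)-\sigma'(v_0-s)$ with $s>0$. The convex-concave assumption $x\sigma''(x)\leq 0$, together with $\sigma$ odd and $\max_{\R}\sigma'=\sigma'(0)$, forces each such difference to have sign $-\sign(v_0)$, yielding strict monotonicity of $\rho\mapsto a(v_0,\rho)$ on $[0,\infty)$ for every $v_0\neq 0$.

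Injectivity on $\R^3\setminus\mathcal{Z}_0$ then follows in cascade: $T_t(v)=T_t(\tilde v)$ gives $v_0=\tilde v_0$ from the first component, $\rho=\tilde\rho$ from the second (by monotonicity of $a(v_0,\cdot)$), $\alpha=\tilde\alpha$ and $\beta=\tilde\beta$ from the third and fourth (using $\partial_\alpha \D^2 h=\partial_\beta \D^3 h=J_0\partial_\rho a/\rho\neq 0$), and finally $\ai=\tilde \ai$ by solving the $2\times 2$ linear system $\ai^{\top}\I(t)=\alpha$, $\ai^{\top}\dot\I(t)=\beta$, which is invertible by the hypothesis $\I(t)\wedge\dot\I(t)\neq 0$; the case $\rho=0$ is immediate. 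For the immersion property on $\R^3\setminus\mathcal{Z}$: in the polar chart $(v_0,\rho,\phi)$ valid where $\rho>0$, the first two rows of $dT_t$ already force the $(v_0,\rho)$-components of any kernel vector to vanish (again by $\partial_\rho a\neq 0$), while the $\phi$-entries of the last two rows are proportional to $\ai^{\perp}\cdot\I(t)$ and $\ai^{\perp}\cdot\dot\I(t)$ with nonzero constants; since $\ai\neq 0$ and $\I(t)\wedge\dot\I(t)\neq 0$ the vector $\ai^{\perp}$ cannot be orthogonal to both $\I(t)$ and $\dot\I(t)$, so the kernel is trivial. The main obstacle is the sign analysis for $\partial_\rho a$, which requires a delicate use of the convex-concave structure of $\sigma$; everything else is essentially linear algebra downstream of that monotonicity.
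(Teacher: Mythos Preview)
Your proposal is correct and follows essentially the same route as the paper: exploit the $O(2)$-symmetry to reduce $T_t$ to functions of $(v_0,\rho)$ plus the linear data $(\ai^\top\I,\ai^\top\dot\I)$, prove the key sign property $\partial_\rho a(v_0,\rho)\neq0$ for $v_0\neq0$, $\rho>0$ via the convex--concave structure of $\sigma$ (this is the paper's Lemma~\ref{Lemma: Gamma_p^i/ Inj Gamma_0^0}\,\textit{(iv)}, with your $a=\AExpress{0}$ and $\partial_\rho a=\BExpress{1}$), and then peel off the components in cascade. The only packaging difference is that the paper lifts to a four-dimensional map $S_t$ on $\R^*\times(0,\infty)\times\R^2$ (with $\zeta\in\R^2$ in place of $e_\phi\in S^1$) and proves that $S_t$ is a global diffeomorphism, then restricts to $|\zeta|=1$; this extra structure is set up because the authors need an explicit inverse of $S_t$ later for the observer (Theorem~\ref{Theorem: inverse mapping}), whereas your direct argument in the $(v_0,\rho,\phi)$ chart is perfectly adequate for the present statement. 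One small imprecision: the $\phi$-entry of the fourth row of $dT_t$ is not simply proportional to $\ai^{\perp}\!\cdot\dot\I(t)$ but is a combination $c_2\,\ai^{\perp}\!\cdot\I(t)+c_1\,\ai^{\perp}\!\cdot\dot\I(t)$ with $c_1=J_0\partial_\rho a/\rho\neq0$; your kernel argument still goes through unchanged since the third row already kills $\ai^{\perp}\!\cdot\I(t)$.
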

The loss of injectivity in the singular region $\mathcal{Z}_0$ is related to the loss of observability in Proposition~\ref{Observability rho}.
The first point implies that the restriction of $T_t$ to $\R^3 \setminus \mathcal{Z}_0$ has an inverse $T_t(\R^3 \setminus \mathcal{Z}_0)\to\R^3 \setminus \mathcal{Z}_0$; the second point implies that this inverse is differentiable on $T_t(\R^3 \setminus \mathcal{Z})$.
In fact this inverse is continuous, but fails to be locally Lipschitz continuous at images of points in $\mathcal{Z}_{1:2}$. 

\begin{remark} \label{remark: Biological Interest.}
    Model~\eqref{EqP} has been introduced in \cite{blumenfeld2006neural} to accommodate for experimental solutions obtained through VSD. These solutions always verify that
    \begin{equation}\label{eq: Solution of interest}
        \exists \eta^*>0,t_{\eta^*}>0, \quad |v_{1:2}| \geq \eta^*, \ \forall \ t \geq t_{\eta^*}.
    \end{equation}  
    In the next section, we discuss a tunable observer that verifies exponential asymptotic convergence for solutions verifying~\eqref{eq: Solution of interest}, while keeping some form of practical convergence for solutions that do not. We observe that appropriately chosen inputs and parameters may produce solutions that cross the region~$\mathcal{Z}_{1:2}$ for arbitrarily large times.
\end{remark}

In light of the above discussion, we define the following family of tubular neighborhoods of $\mathcal{Z}$, indexed by two parameters $\delta>0$, $\eta>0$:
\begin{equation}\label{eq:001}
    \mathcal{Z}_{\delta,\eta}:=\{ v \in \R^3:\: |v_0|<\delta \} \cup \{ v \in \R^3:\: |v_{1:2}|<\eta \}.
\end{equation}
The restriction of the inverse of $T_t$ to a bounded part of $T_t(\R^3 \setminus \mathcal{Z}_{\delta,\eta})$ will have a global Lipschitz constant.

\medskip

Let us now derive from the above observability conditions an assumption that will be useful for designing an observer in next subsection.
We saw in Proposition~\ref{Proposition : Passage} that the trajectories cross the non observalility locus $\{v_0=0\}$ at most once if $I_0$ stays away from zero and in Proposition~\ref{Observability Theorem} that observability occurs when $v_0$ and $I_{1:2} \wedge \dot{I}_{1:2}$ stay away from zero; in view of these points, we make the following assumption on the input, for the rest of the paper.
\begin{Assumption} \label{Strong Persistence}
The input $t\mapsto I(t)$ is in $C^3([0,+\infty),\mathbb{R}^3)$ and is bounded on $[0,+\infty)$,  as well as 
its time-derivatives of order up to three, and the following lower bounds hold for some positive numbers $c$ and $\mu$:
\begin{equation}
    \label{eq:lower-bound+cond-det-I}
    I_0(t) \ge c>0 
    \quad \text{and} \quad
    | I_{1:2}(t) \wedge \dot{I}_{1:2}(t) | \geq \mu>0
    \qquad \text{for all $t$ in $[0,+\infty)$.}
\end{equation}
\end{Assumption}

\begin{remark}
    The same results in this paper can be obtained by assuming $|I_0(t)|\geq c$ rather than $I_0(t)>c$. The positive sign of $I_0$  in the first relation in \eqref{eq:lower-bound+cond-det-I} is chosen to facilitate the exposition. 
\end{remark}

\begin{remark}
    The second condition in \eqref{eq:lower-bound+cond-det-I} is a stricter version of the standard lower bound for persistence of excitation-type conditions
    \begin{equation}\label{persist1}
      \exists \mu^*, t^* >0, \ \forall t_0>0, \   \frac{1}{t^*}\int_{t_0}^{t_0+t^*} I_{1:2}(t)^{\top} I_{1:2}(t)dt \ \succeq \ \mu^* \operatorname{Id}_{2\times2},
    \end{equation}
where $\operatorname{Id}_{2\times2}$ denotes the identity matrix of dimension 2.
\end{remark}

\subsection{Observer design}

In this section, we propose an observer based on the high-gain construction \cite{bernard2022observer,gauthier_kupka_2001} that takes into account the observability difficulties encountered in the model.
Using the notations of previous section, the mapping $(t,v)\mapsto T_t(v)$
is structured to satisfy the following condition for all $v \in \R^3$ and $t \in \mathbb{R}$:
\begin{equation} \label{eq:Immersed space}
\partial_v T_t (v,t) f(v,I(t)) + \partial_{t}T(v,t)=AT_t(v,t) \ + \ W \D^4 h (v,t),
\end{equation}
where $A$ and $W$ are matrices defined as:
\begin{equation}
A=
\begin{pmatrix}
0&1&0&0 \\ 0&0&1&0\\0&0&0&1\\0&0&0&0
\end{pmatrix},
\quad
W=
\begin{pmatrix}
0\\0\\0\\1
\end{pmatrix}.
\end{equation}
The next step involves defining a Lipschitz pseudo-inverse $\mathfrak{T}_t$ for $T_t$ at any given time $t\ge0$. This pseudo-inverse is essential for extracting $v(t)$ from the values $z(t) = T_t(v(t))$ and to make so that only terms depending on $z$ appears in the right-hand side of \eqref{eq:Immersed space}. A high-gain can then be designed which estimates the value of $z(t)=T_t(v(t))$ for any solutions $v$ of \eqref{EqP} based on its output $y$. The efficacy of this design hinges on existence of a pseudo-inverse of $T_t$ which is Lipschitz, ensuring the observer’s accurate and stable convergence.
For this design to be feasible,some form of differential observability is required as mentioned in the previous section. Further details on this construction can be found in~\cite{besanccon2007nonlinear} for example.

 Within the established framework, let us now construct, for each time $t \ge 0$, a globally Lipschitz continuous mapping denoted as $\Teinv$, serving as a left inverse to the observability mapping $T_t$ within the set $\mathcal{Z}_{\delta,\eta}^{c} \cap B_{\R^3}(0,R)$ \footnote{$\mathcal{Z}_{\delta,\eta}^{c}$ denotes the complement of the set $\mathcal{Z}_{\delta,\eta}$.}, where $R$ is a sufficiently large constant, $0 < \eta < R$, and $0<\delta < \delta^*$, where $\delta^*$ is defined in Proposition~\ref{Proposition : Passage}. The validity of Proposition~\ref{Solution defined Bounded} is invoked to affirm the existence of an arbitrarily large $R > 0$, ensuring that $B_{\R^3}(0,R)$ constitutes an invariant attracting set for the system described by equation \eqref{EqP}. Subsequently, the use of a projection map alongside the inherent symmetry of the non-linear term in \eqref{EqP} is employed to broaden the definition of $\Teinv$ across the entire domain $\R^4$ at any time $t \ge 0$. This is encapsulated within the following theorem, the proof of which is given in Section~\ref{Section 3.3}.  See \eqref{eq:t-inverse} for the explicit expression of $\Teinv$.

\begin{thm}
    \label{Theorem: inverse mapping}
     Suppose that Assumption~\ref{Strong Persistence} holds. Let $\delta>0$, $0 <\eta <R$. Then, for any output $y=h\circ v$, there exists a map 
     \begin{equation}
         \mathfrak{T}:\mathbb{R}^4 \times [0,+\infty) \to
         \{(u_0,u_1,u_2)\in \R^3, \delta \leq |u_0| \leq R \ \text{and} \ \eta \leq|u_{1:2}|\leq R^2 \},
     \end{equation}  
     such that $\mathfrak{T}_t = \mathfrak{T}(\cdot,t)$ is globally Lipschitz continuous uniformly with respect to $t$, and satisfies
    \begin{equation}
        \label{eq:estimate-t-inverse}
       | \mathfrak{T}_t(T_t(u)) - u| \leq \  \eta \chi(u), \quad \forall u \in \mathcal{Z}_{\delta,\eta}^{c}\cap B_{\R^3}(0,R) \text{ such that }  u_0y(t)>0, \  |u_0| \geq \delta 
    \end{equation}
where
\begin{equation}\label{E:chi}
\chi(v):=\begin{cases}
        0 & |v_{1:2}| \geq \eta, \\
        1 & \text{otherwise}.
    \end{cases}
\end{equation}
\end{thm}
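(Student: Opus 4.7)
I would build $\mathfrak{T}_t$ in three stages: (i) construct a local inverse of $T_t$ on the ``good region'' $K:=\mathcal{Z}_{\delta,\eta}^c\cap B_{\R^3}(0,R)$, using the injective immersion statement of Theorem~\ref{prop:T-inj}; (ii) show that the local Lipschitz constant of this inverse can be chosen uniformly in $t$ under Assumption~\ref{Strong Persistence}; (iii) extend the inverse to all of $\R^4$ by a Lipschitz extension theorem followed by a projection onto the required annular target set, handling the two possible signs of $u_0$ via the symmetry stemming from the oddness of $\sigma$.

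\textbf{Stage (i): local inverse.} Because $\delta>0$ and $\eta>0$, the set $K$ is a compact subset of $\R^3\setminus\mathcal{Z}$. By Theorem~\ref{prop:T-inj} and Assumption~\ref{Strong Persistence} (which guarantees $I_{1:2}(t)\wedge\dot I_{1:2}(t)\neq 0$ for all $t$), $T_t$ is an injective immersion on $\R^3\setminus \mathcal{Z}$, hence a diffeomorphism between $K$ and its image $T_t(K)$. This yields a $C^1$ inverse $T_t^{-1}:T_t(K)\to K$ such that $T_t^{-1}(T_t(v))=v$ for all $v\in K$.

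\textbf{Stage (ii): uniform Lipschitz bound.} Differentiating through the expression \eqref{eq:Introduction T}, the Jacobian $\partial_v T_t(v)$ is a polynomial expression in $v$, $\sigma^{(k)}(V)$, and the first three derivatives of $I$ evaluated at $t$. Under Assumption~\ref{Strong Persistence} these inputs and their derivatives are uniformly bounded in $t$, and the quantitative nondegeneracy $|I_{1:2}\wedge\dot I_{1:2}|\ge\mu$ together with the lower bounds $|v_0|\ge\delta$ and $|v_{1:2}|\ge\eta$ on $K$ give a uniform positive lower bound on the singular values of $\partial_v T_t$ on $K$. Consequently $T_t^{-1}$ is Lipschitz with a constant independent of~$t$. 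Here the key technical point is tracing explicitly the dependence of $\det\partial_v T_t$ on $|v_{1:2}|$ and on $|I_{1:2}\wedge\dot I_{1:2}|$ — essentially the same computation that establishes the immersion property in Theorem~\ref{prop:T-inj}.

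\textbf{Stage (iii): global extension, projection, and symmetry.} The restriction $T_t^{-1}|_{T_t(K)}$ is a Lipschitz map from a subset of $\R^4$ into $\R^3$; by a componentwise application of the McShane extension theorem it extends to a Lipschitz map on all of $\R^4$ with the same constant. Composing with the nearest-point projection onto the closed convex annular cylinder $\{\delta\le|u_0|\le R,\ \eta\le|u_{1:2}|\le R^2\}$ (a Lipschitz operation, carried out separately on the $u_0$ and $u_{1:2}$ components so that convexity is preserved) forces the range requirement. Finally, the non-linear part of $f$ in \eqref{EqP} is odd in $v$ by oddness of $\sigma$, so for any input $I$ one has an induced symmetry between solutions with $v_0>0$ and $v_0<0$ that permits defining $\mathfrak{T}_t$ on the half-space $\{u_0<0\}$ from its values on $\{u_0>0\}$ by reflection; this is compatible with the selection made by the branch condition $u_0y(t)>0$ and yields a single globally Lipschitz $\mathfrak{T}_t$. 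On $K$ no projection is active (by construction $T_t(K)$ lies in the correct half-space and target set), so $\mathfrak{T}_t\circ T_t=\mathrm{id}$ on $K$, and since $\chi(u)=0$ for $u\in K$ the estimate \eqref{eq:estimate-t-inverse} reduces to equality.

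\textbf{Expected main obstacle.} The delicate point is stage~(ii): obtaining a Lipschitz constant uniform in $t$ requires quantitative control of $\partial_v T_t$ down to the boundary $\{|v_{1:2}|=\eta\}$, where the immersion degenerates as $\eta\downarrow 0$. This is exactly why the tubular neighborhood $\mathcal{Z}_{\delta,\eta}$ is inflated by $\eta$ away from $\mathcal{Z}_{1:2}$, and the explicit dependence of the Lipschitz constant on $\eta$ must be tracked to reconcile the extension with the projection step without destroying the exact-inverse property on $K$.
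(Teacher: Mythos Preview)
Your Stage~(iii) contains a genuine gap. The target set is not convex: the $u_0$-factor $[-R,-\delta]\cup[\delta,R]$ is disconnected, and the $u_{1:2}$-factor $\{\eta\le|w|\le R^2\}$ is an annulus. The first is easily repaired by using $\mathrm{sgn}\,y(t)$ to select one interval, as the branch condition $u_0y(t)>0$ already suggests. The second cannot be: any continuous map $\R^2\to\{\eta\le|w|\le R^2\}$ that is the identity on the circle $\{|w|=\eta\}$ (which you need, since the $u_{1:2}$-projection of $K$ contains that circle) would, restricted to the closed disk $\{|w|\le\eta\}$ and composed with the radial retraction of the annulus onto its inner boundary, yield a retraction of the disk onto its boundary circle --- impossible by Brouwer's theorem. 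So no Lipschitz post-composition acting separately on $u_{1:2}$ can simultaneously force the annular range and preserve $\mathfrak{T}_t\circ T_t=\mathrm{id}$ on~$K$. (Your symmetry argument is also off: oddness of $\sigma$ gives $f(-v,-I)=-f(v,I)$, not a relation at fixed input~$I$, so it does not let you reflect between half-spaces.)

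The paper sidesteps this obstruction by lifting to four-dimensional polar coordinates $X=(v_0,\rho,\zeta)$ with $\zeta\in\R^2$ unconstrained, so that $T_t$ factors through a genuine diffeomorphism $S_t:\mathcal{X}\to\R^4$ (Theorem~\ref{Theorem 1 S inj imm}). In these coordinates the constraint $\rho\in[\eta,R]$ is one-dimensional, and since $z_1$ is a strictly monotone function of $\rho$ at fixed $z_0=v_0$ (because $\partial_\rho\Gamma_0^0=\Gamma_1^1\neq 0$), it is enforced by an explicit Lipschitz projection $\Pi_t$ acting on $(z_0,z_1)$ in the \emph{source} space $\R^4$, \emph{before} inverting. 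The pseudo-inverse is then the explicit composition $\mathfrak{T}_t=\Phi\circ\Sat\circ S_t^{-1}\circ\Pi_t$, where $\Phi(X)=(X_0,X_1X_{2:3})$ collapses polar back to Cartesian and $\Sat$ is a smooth cut-off on the $\zeta$-component; each factor is Lipschitz on the relevant sets, uniformly in $t$ under Assumption~\ref{Strong Persistence}, and both the exact-inverse property on $K$ and the $\eta$-bound for $|v_{1:2}|<\eta$ follow by direct computation in polar coordinates.
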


We propose now the following observer which takes into account the consideration of Remark~\ref{remark: Biological Interest.}.

Let be $0<\delta<\delta^*$ where $\delta^*$ is defined in Proposition~\ref{Proposition : Passage}, $0<\eta<R$, $l \geq 1$. Let $C=(1,0,0,0)\in\R^{1\times 4}$ and $P_{l}(i,j):= \frac{(-1)^{i+j}}{l^{i+j-1}} \cdot \frac{(i + j - 2)!}{(i - 1)!(j - 1)!}$.
Considering $y=h\circ v$, where $v$ is a solution of \eqref{EqP}, we study a tunable state observer, piece-wise defined as follows:
\begin{equation} \label{Observer 1}
    \begin{cases}
    \hat{v}(t)= \Teinv(\hat{z}(t)), \quad
    \dot{\hat{z}}(t)=A\hat{z}(t) + \D^4h (\mathfrak{T}_{t}(\hat{z}(t))) W -P_l^{-1}C^{\top}(C\hat{z}(t)-y(t)), &\ \text{if } |y(t)|>\delta, 
     \\
    \dot{\hat{v}}(t)=f(\hat{v},t),  \quad \ \hat{z}(t)=T_t(\hat{v}(t))  &\ \text{if } |y(t)|<\delta.
    \end{cases}
\end{equation}

This observer is a hybrid system where simple rules, governed by the sign of $|y(t)|-\delta$, switch the dynamics between two time-varying continuous-time dynamical
systems (explicit dependence on time comes from $I$ and
$y$, whose evolution does not depend on the observer), with states $\hat z\in \R^4$ and $\hat v\in \R^3$.  
Without resorting to a general framework for hybrid systems (see \textit{e.g.} \cite{VdSc-Sch00}), let us
make clear what a solution of this system is.

The initial condition is given by some $\hat z(0)\in\R^4$ if $|y(0)|>\delta$ (or $|y(0)|=\delta$
and $|y(t))|>\delta$ for small positive $t$), and by some $\hat v(0)\in\R^3$ if $|y(0)|<\delta$ or $|y(0)|=\delta$
and $|y(t))|<\delta$ for small positive $t$.
We show in Proposition~\ref{prop:observer_defined} that the solutions of the corresponding ODE are well-defined until the
first time where $|y(t)|-\delta$ changes sign.

Proposition \ref{Proposition : Passage} tells us that $|y(t)|-\delta$
changes sign at most twice on $[0,+\infty)$; we have to describe the jumps between
state spaces at such a time $t_\star$ (either $\tun$ or $\tdeux$ from Proposition~\ref{Proposition : Passage}) to be complete.
Let $t_\star$ be the first such time.
If $|y(t)|\geq\delta$ on $[0,t_\star]$, $|y(t)|>\delta$ on some $[t_\star-\varepsilon,t_\star)$, the
solution $\hat z(t)$ of the first line of \eqref{Observer 1} is well defined on some
$[0,t_\star+\varepsilon]$ so that it has a limit
at $t=t_\star$, that we may denote by $\hat{z}(t_\star^-)$; the
solution to the hybrid system \eqref{Observer 1} is continued by initializing
the second differential equation with state $\hat v$ according to $\hat{v}(t_\star^-) = \Teinv(\hat{z}(t_\star^-))$ on the next interval where
$|y(t)|-\delta$ is negative;
similarly, if $|y(t)|<\delta$ on some $[t_\star-\varepsilon,t_\star)$ (and $|y(t)|>\delta$
on some $(t_\star,t_\star+\varepsilon]$), the initialization in the new state space is made
according to $\hat{z}(t_\star^-) = T_t(\hat{v}(t_\star^-))$.

Henceforth, with a slight abuse of language, we refer to $\hat v$ as the solution of \eqref{Observer 1}.
We can now state the following, which is proven in Section~\ref{Section 4}

\begin{Proposition}
    \label{prop:observer_defined}
    For any $\delta<\delta^*$, with $\delta^*$  given by \eqref{eq:deltastar} in Proposition~\ref{Proposition : Passage}, any  $0<\eta <R$, solutions of \eqref{Observer 1} are defined globally, piece-wise continuous, with $\hat v$ having at most a single jump.
\end{Proposition}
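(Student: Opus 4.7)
The proof unfolds in three stages: global well-posedness on each continuous regime, a bound on the number of regime switches, and a continuity analysis of $\hat v$ at those switches.

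On any maximal interval where $|y(t)|>\delta$, the state $\hat z$ satisfies
\begin{equation}
\dot{\hat z}(t) = A\hat z(t) + \D^4 h(\Teinv(\hat z(t)))W - P_l^{-1}C^\top\bigl(C\hat z(t) - y(t)\bigr).
\end{equation}
The linear terms are globally Lipschitz in $\hat z$, the driving term $P_l^{-1}C^\top y(t)$ is bounded because $y(t)=v_0(t)$ stays in the invariant attracting ball of Proposition~\ref{Solution defined Bounded}, and the nonlinearity $\hat z \mapsto \D^4 h(\Teinv(\hat z))$ is bounded and globally Lipschitz in $\hat z$ uniformly in $t$: by Theorem~\ref{Theorem: inverse mapping}, $\Teinv$ is globally Lipschitz uniformly in $t$ with values in a fixed bounded subset of $\R^3$, and the smoothness of $h$, $f$ together with the bounds on $I$ and its first three derivatives from Assumption~\ref{Strong Persistence} makes $(v,t)\mapsto \D^4 h(v,t)$ Lipschitz in $v$ on bounded sets uniformly in $t$. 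Cauchy--Lipschitz then yields existence and uniqueness of $\hat z$ on every such interval. On any maximal interval where $|y(t)|<\delta$, the equation $\dot{\hat v}=f(\hat v,I(t))$ admits a global solution by Proposition~\ref{Solution defined Bounded}.

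To count the regime switches, Proposition~\ref{Proposition : Passage} tells us that the set $\{t\ge 0 : |y(t)|\le \delta\}$ is a single (possibly empty or unbounded) time interval; hence $|y(t)|-\delta$ changes sign at most twice on $[0,+\infty)$, with at most one ``outside-to-inside'' and at most one ``inside-to-outside'' switch. Extending $\hat v$ to the $\hat z$-regime via $\hat v(t):=\Teinv(\hat z(t))$, the outside-to-inside reset rule $\hat v(t_\star^-)=\Teinv(\hat z(t_\star^-))$ coincides with the left limit of this extension, so $\hat v$ is continuous at such a time. At an inside-to-outside time $t_\star$, however, $\hat z(t_\star^-)=T_{t_\star}(\hat v(t_\star^-))$ is followed by $\hat v(t_\star^+)=\Teinv(T_{t_\star}(\hat v(t_\star^-)))$, which need not equal $\hat v(t_\star^-)$ since $\Teinv$ is only a pseudo-inverse of $T_{t_\star}$; a jump may occur. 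As there is at most one such switch, $\hat v$ is globally defined, piece-wise continuous, with at most one discontinuity.

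The main technical hurdle is the first stage, and specifically the uniform-in-$t$ global Lipschitz property of $\Teinv$ together with its bounded image, granted by Theorem~\ref{Theorem: inverse mapping}: without it, the high-gain $\hat z$-dynamics could escape in finite time or admit non-unique solutions. The remaining arguments reduce to a finite enumeration of the possible regime schedules, tightly constrained by Proposition~\ref{Proposition : Passage}, followed by a direct check of continuity against each of the two reset rules stipulated after \eqref{Observer 1}.
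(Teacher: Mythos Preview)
Your proof is correct and follows essentially the same approach as the paper. The paper packages the global Lipschitz property of $\hat z\mapsto \D^4 h(\Teinv(\hat z))$ into a separate lemma (Lemma~\ref{Lemma:Non Linear Part Lipschitz}) and then cites it, whereas you inline that argument directly using Theorem~\ref{Theorem: inverse mapping} and Assumption~\ref{Strong Persistence}; the switching count via Proposition~\ref{Proposition : Passage} and the continuity analysis at the two resets are identical in substance.
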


\begin{thm} \label{Theorem Observer 1}
    Suppose Assumption~\ref{Strong Persistence} holds. For any $R>R^*$, where $R^*$ is defined in Proposition~\ref{Solution defined Bounded}, for any $v$ solution of \eqref{EqP} with output $y=v_0$, and such that $v(0)\in B_{\R^3}(0,R)$, 
    we have the following.
    
    \begin{enumerate}[(i)]
        \item\label{it:main1} For any $\delta<\delta^*$, where $\delta^*$ is defined by \eqref{eq:deltastar} in Proposition~\ref{Proposition : Passage},  $0<\eta<R$, there exist $l^*$, $M>0$, $L>0$ such that for any $l \geq 1$, any interval $[t_0,t_f]$ where $|y(t)| \ge \delta$ for $ t\in [t_0,t_f] $, we have for any solution $\hat v$ of \eqref{Observer 1} with initial conditions $\hat{v}(0) \in \R^3$ and $\hat z(0)= T_0(\hat v(0))$, it holds for any time $t_0\leq t<t_f$
    \begin{equation}\label{eq:bigthm:1}
        |v(t)-\hat{v}(t)| \leq \eta \chi(v(t))  +  M e^{-(l-l^*) (t-t_0)} \biggl( l^3|\hat{z}(t_0)-T_{t_0}(v(t_0))|  + \int_{t_0}^t e^{(l-l^*) (s-t_0)}L \eta \chi(v(s)) \,ds \biggr)
    \end{equation}
    where $\chi$ is defined by \eqref{E:chi} in Theorem~\ref{Theorem: inverse mapping}.
    Moreover, there exists $L_2>0$ such that if the the switching interval $\mathcal{S}=\{t \ge 0: |y(t)|<\delta\}$ is non-empty, letting $t_1 = \inf\mathcal{S}$, it holds
    \begin{equation}\label{eq:bigthm:2}
      |v(t)-\hat{v}(t)|  \leq e^{L_2 (t-t_1)} |\hat{v}(t_1)-v(t_1)| , \quad t \in \mathcal{S},
    \end{equation}
    
    \item\label{it:main2} Suppose that there exist $\eta^*>0$ such that $|v_{1:2}(t)| \geq \eta^*, \ \forall t \geq 0.$
    Then for $\eta=\eta^*$, $0<\delta<\delta^*$, there exist $l^*>0$, $M>0$ and $k>0$ such that for any $l \geq l^*$ for any solution $\hat v$ of \eqref{Observer 1}  with initial conditions $\hat{v}(0) \in \R^3$ and $\hat z(0)= T_0(\hat v(0))$, it holds 
    \begin{equation}\label{eq:bigthm:3}
    |\hat{v}-v|\leq k|v(0)-\hat v (0)|e^{-(l-l^*) t}, \quad t \geq 0.
    \end{equation}

    \item\label{it:main3} For any time $t^*>0$, any compact $\mathcal{K}\subset \R^3$, and any $\varepsilon>0$, there exist $\delta>0$, $\eta>0$, $l > l^*$ sufficiently large such that for any solution $\hat v$ of \eqref{Observer 1}  with initial conditions $\hat{v}(0) \in \mathcal{K}$ and $\hat z(0)= T_0(\hat v(0))$, letting $t_2:=\inf \{ t \geq 0, \ y(t)=\delta \}=\sup\mathcal{S}$, it holds
    \begin{equation}\label{eq:bigthm:5}
    |\hat{v}(t)-v(t)| \leq \varepsilon , \quad \ t \in [t^*,t_2] \cup (t_2+t^*,\infty).
    \end{equation}
    \end{enumerate}
\end{thm}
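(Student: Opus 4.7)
The strategy is the standard high-gain observer analysis (see \cite{bernard2022observer,gauthier_kupka_2001}) adapted to the hybrid switching between $\hat z$-mode and $\hat v$-mode and to the fact that $\mathfrak{T}_t$ is only an \emph{approximate} left inverse of $T_t$. On any interval $[t_0,t_f]$ on which the observer is in $\hat z$-mode, I set $z(t)=T_t(v(t))$ and $e(t)=\hat z(t)-z(t)$. Differentiating $z$ via the immersion identity \eqref{eq:Immersed space}, namely $\dot z = A z + W\,\D^4 h(v,t)$, and subtracting from the first line of \eqref{Observer 1} yields
\begin{equation*}
 \dot e = \bigl(A - P_l^{-1} C^\top C\bigr)\,e \;+\; W\bigl[\D^4 h(\Teinv(\hat z),t) - \D^4 h(v,t)\bigr].
\end{equation*}
Theorem~\ref{Theorem: inverse mapping} combined with the global Lipschitz property of $\Teinv$ gives $|\Teinv(\hat z) - v| \le L_{\mathfrak T}|e| + \eta\chi(v)$ whenever $v\in B_{\R^3}(0,R)$ (the sign condition $u_0y>0$ being automatic since $u_0=v_0=y$). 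Since the invariant ball $B_{\R^3}(0,R)$ of Proposition~\ref{Solution defined Bounded} is bounded and $I$ is $C^3$ and bounded together with its derivatives by Assumption~\ref{Strong Persistence}, $\D^4 h$ is globally Lipschitz on the relevant compact set, so the forcing term is bounded by $L_h L_{\mathfrak T}|e| + L_h\eta\chi(v)$.

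For the high-gain part, $P_l$ is the standard Lyapunov solution of $l P_l + A^\top P_l + P_l A = C^\top C$ (as recognised from its entries), and satisfies the key scaling identity $P_l = D_l^{-1} P_1 D_l^{-1}$ with $D_l=\diag(1,l,l^2,l^3)$, so that with $V(e)=e^\top P_l e$ the homogeneous part gives $V'\le -l V$. Plugging in the perturbation yields $V'(e)\le -(l-l^*)V + \mathrm{const}\cdot \eta^2\chi(v)^2$, where $l^*$ absorbs $L_h L_{\mathfrak T}$. A comparison inequality followed by conversion from $V$ to $|e|$ using $\lambda_{\min}(P_l)\gtrsim l^{-7}$, $\lambda_{\max}(P_l)\lesssim 1$ produces the pre-factor of order $l^3$ appearing in \eqref{eq:bigthm:1}; the final step $|\hat v-v|\le L_{\mathfrak T}|e|+\eta\chi(v)$ (triangle inequality plus Theorem~\ref{Theorem: inverse mapping}) gives the asserted bound. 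On a switching interval $\mathcal{S}$, both $v$ and $\hat v$ solve the same ODE $\dot w=f(w,I(t))$, and since $f$ is globally Lipschitz in $w$ on the invariant ball (uniformly in $t$), a direct Gronwall argument yields \eqref{eq:bigthm:2} with $L_2$ the Lipschitz constant of $f$ there. This settles \eqref{it:main1}.

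For \eqref{it:main2}, the assumption $|v_{1:2}|\ge\eta^*$ with $\eta=\eta^*$ forces $\chi(v(\cdot))\equiv 0$, so \eqref{eq:bigthm:1} reduces to pure exponential decay with rate $l-l^*$ on every $\hat z$-interval. By Proposition~\ref{Proposition : Passage} the switching set $\mathcal{S}$ is a single interval of length at most $t_\delta$, independent of $l$, during which the error grows by at most $e^{L_2 t_\delta}$. I chain the estimates across the at most three pieces $[0,t_1]$, $[t_1,t_2]$, $[t_2,\infty)$ determined by Proposition~\ref{Proposition : Passage}, using local Lipschitzness of $T_{t_2}$ (it is $C^1$ on the bounded set) to translate the $\hat v$-error at $t_2$ into an $\hat z$-error at $t_2$. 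The bounded switching amplification together with the $l^3$ pre-factor are absorbed into $k$, yielding \eqref{eq:bigthm:3}.

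Finally, \eqref{it:main3} is a matter of carefully coordinating $\delta$, $\eta$, and $l$ given $\varepsilon, t^*, \mathcal{K}$. The asymptotic residual in \eqref{eq:bigthm:1} satisfies $M\int_{t_0}^{t} e^{-(l-l^*)(t-s)}L\eta\,ds \le ML\eta/(l-l^*)$, which I make smaller than $\varepsilon/2$ by fixing $\eta$ small and $l$ large. The transient $Ml^3 e^{-(l-l^*)(t-t_0)}|\hat z(t_0)-T_{t_0}(v(t_0))|$ decays below $\varepsilon/2$ uniformly on $\mathcal K$ after any prescribed delay $t^*$ by choosing $l$ still larger. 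Choosing $\delta$ small makes $t_\delta$ small (see \eqref{eq:tdelta}), so the switching amplification factor $e^{L_2 t_\delta}$ stays bounded and $\hat v(t_2)$ inherits a controlled error; applying the same transient argument on $[t_2,\infty)$ then gives smallness after another delay $t^*$. Combining yields \eqref{eq:bigthm:5} on $[t^*,t_2]\cup(t_2+t^*,\infty)$. The main technical obstacle throughout the proof is precisely this coordination: since $L_{\mathfrak T}$ (and hence $l^*$) depends on $\eta$ and $\delta$ through the construction of Theorem~\ref{Theorem: inverse mapping}, the parameters must be chosen in the order $\eta\to\delta\to l$, picking $l$ only after $l^*(\eta,\delta)$ is fixed.
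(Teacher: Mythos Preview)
Your proof plan is essentially the same as the paper's: Lyapunov analysis with $V(e)=e^\top P_l e$ on $\hat z$-intervals using the algebraic identity $P_lA+A^\top P_l-C^\top C=-lP_l$ and the eigenvalue scaling $l^{-7}\nu_{\min}\operatorname{Id}\le P_l\le l^{-1}\nu_{\max}\operatorname{Id}$, the perturbation bound $|\Teinv(\hat z)-v|\le M_0|e|+\eta\chi(v)$ from Theorem~\ref{Theorem: inverse mapping}, Gr\"onwall on the switching interval via the Lipschitz constant of $f$, and chaining across at most three sub-intervals for \eqref{it:main2} and \eqref{it:main3}.

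The one place where you diverge from the paper is the parameter ordering in \eqref{it:main3}. You propose $\eta\to\delta\to l$, reasoning (correctly) that $l^*$ depends on $\delta$ through $M_0$. The paper instead fixes $\eta$, then $l$, then $\delta$, because an amplification factor $e^{(l-l^*)t_\delta}$ appears when bounding the error on $[t^*,t_2]$ in the case $t_1<t^*$: one has $e^{-(l-l^*)t_1}\le e^{(l-l^*)t_\delta}e^{-(l-l^*)t}$ for $t\le t_2$, and this factor must be tamed by taking $t_\delta$ small \emph{after} $l$ is fixed. Your write-up only mentions $e^{L_2 t_\delta}$ and does not identify this $l$-dependent amplification. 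Your ordering can still be made to work provided you first impose $t_\delta<t^*$ (so that $t^*-t_\delta>0$ is a fixed positive gap before $l$ is chosen, and $Ml^3 e^{-(l-l^*)(t^*-t_\delta)}\to0$), but you should make that step explicit; as written, the sentence ``$e^{L_2 t_\delta}$ stays bounded and $\hat v(t_2)$ inherits a controlled error'' glosses over the case where $t_1$ is close to (but below) $t^*$.
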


Let us comment on this theorem. 
We described in Section \ref{sec:observability} a certain number of
observability losses in our model \eqref{EqP}-\eqref{EqP-y};
these occur for certain behaviors of the input and in some regions of
the state space.
Our approach has been to make assumptions on the input ---this
is Assumption~\ref{Strong Persistence}--- but to then proceed with crafting
an ad-hoc versatile observer ---this is \eqref{Observer 1}, based on the pseudo-inverse whose properties are
described by Theorem~\ref{Theorem: inverse mapping}--- capable of achieving tunable exponential
convergence when the state trajectory stays away from the identified
singularities, or do not approach them ``too often'', while also
ensuring acceptable performance for any other space trajectories.
Let us detail and comment these properties, stated formally above.
\begin{itemize}
\item Solutions of the observer are defined for all time for any initial condition, for any
  state trajectory of the system, this is Proposition~\ref{prop:observer_defined}.
\item It provides tunable exponential convergence when the state trajectory stays in some
  region $\{|v_{1:2}|>\eta^*>0\}$; this is point \eqref{it:main2} of the theorem.
  Note that the hybrid construction allows us to preserve exponential convergence through
  one possible passage through the non-obervability locus $\{v_0=0\}$.
  We explain in Remark~\ref{remark: Biological Interest.} to what extent these state
  trajectories are the ones of biological relevance. 
  One might chose to see this point as the main result. We refer the reader to Section~\ref{Section:Numerical Simulations} for an example of such a trajectory.
\item It provides rougher practical convergence when the state trajectory keeps passing
  in the ``bad regions'' as time grows; 
  this is point \eqref{it:main3}
  that gives an estimate that holds \emph{except} on a possible (arbitrarily small) time interval where $|v_0|$ is small. 
\end{itemize}
Point \eqref{it:main1} gives an explicit bound on the error
taking into account all contributions, without making any assumption on the state trajectory; this bound is possibly difficult to interpret but is necessary for completeness.
The other points may be deduced from the general estimations \eqref{eq:bigthm:1} and \eqref{eq:bigthm:2}. %

\section{Technical preliminaries} \label{Section Prelemenaries}

In this section, we prove the main properties of system \eqref{EqP} and the observability mapping $T$ introduced in~\eqref{eq:Introduction T}. We first start by proving the global existence of the solutions of \eqref{EqP} and the existence of attracting sets for this system.
\begin{proof}[Proof of Proposition~\ref{Solution defined Bounded}]
Recall that the function $\sigma$ is Lipschitz continuous with Lipschitz constant $\sigma'(0)$. Hence, letting $w = (1,r\cos(2\theta),r\sin(2\theta) )^\top$, we have for $v,\Tilde{v} \in \R^3 $:
\begin{equation}
    |\sigma(w^\top v) - \sigma(w^\top \tilde v)| \leq \sigma'(0) \sqrt{1+ r^2} \ |v-\Tilde{v}|.
\end{equation}
We therefore obtain,
\begin{equation}
        |f(v,t)-f(\Tilde{v},t)| \leq \bigl| \ -1 +  \ \sqrt{(J_0 ^2 +2 J_1 ^2)} \   \int_{[0,\infty)}\sigma'(0)(1+r^2)P(r) dr \ \bigr| \  |v-\hat v|,
\end{equation}
    which is then bounded uniformly on time as $P$ is compactly supported by assumption. Therefore solutions of \eqref{EqP} are well defined for every time.
    
    Let now $v$ be solution of \eqref{EqP}. Using the fact that $\sigma$ is bounded by 1, we have that: $$\dot{v}^{\top} v=\frac{1}{2}\frac{d |v|^2}{dt} \leq -|v|^2 + \Bigl( \sqrt{(J_0 ^2 + 2J_1^2)} \ \int_R(\sqrt{1+r^2}P(r)dr  + \sup_t ||I|| \Bigr) \ |v|,$$
    as the integral term is uniformly bounded.  
    Therefore, if $ \sup_t ||I|| < \infty$ there exist $R^*>0$ large enough such that for any $R\geq R^*$, if $|v| > R$ we have $\dot{v}^{\top}v < 0.$
\end{proof}

To study the observability mapping $T$, we utilise the $O(2)$ symmetry of the non-linear part of \eqref{EqP} and therefore study \eqref{EqP} from polar coordinate point of view. This allows us to naturally extend  the map $T_t$  into a diffeomorphism at each time $t$ and more easily construct a left inverse. 

We now put system \eqref{EqP} into polar coordinates. To this effect, we rewrite the system as
\begin{equation}
    \Dot{v}=-v + \Psi(v) + I,
\end{equation}
where $\Psi$ the non-linear integral part of $f$. For any $\phi\in [0,2\pi)$ we have
\begin{equation}\label{E:sym}
    \Psi\left(\ROT{\phi}v\right)=\ROT{\phi}\Psi(v),
\end{equation}
which is obtained simply through the change of variable $\theta'=\theta+\phi$ in the periodic integrals.
To take advantage of the above observation,
we introduce the following quantities, for $j,p\in\mathbb{N}$,
\begin{equation}\label{def-Gammapj}
    \Gamma_p^j(v_0,\rho) = \int_\Omega r^j \cos(2\theta)^j \sigma^{(p)}(v_0+r\rho\cos(2\theta)) P(r)\frac{d\theta dr}{\pi}, \qquad v_0\in\mathbb{R},\, \rho \geq 0.
\end{equation}
Here, $\sigma^{(p)}$ denotes the $p$-th derivative of the real-valued function $\sigma$, in particular $\sigma^{(0)}=\sigma$.
Letting $v_{1:2}=\rho e_{\phi}$ with $e_\phi=(\cos\phi,\sin\phi)^\top$,  via \eqref{E:sym} above we obtain

\begin{equation}
    \Psi(v) = \ROT{\phi} 
    \Psi  \begin{pmatrix}
        v_0 \\ |v_{1:2}| \\ 0
    \end{pmatrix} = \ROT{\phi} 
    \begin{pmatrix}
    J_0 \Gamma_0^0(v_0,\rho) \\
    J_1 \Gamma_0^1(v_0,\rho) \\
   0
   \end{pmatrix}.
\end{equation}

Here, we used the fact that 
\begin{equation}
    \int_\Omega r\sin(2\theta) \sigma(v_0+r\rho \cos(2\theta))\,\frac{d\theta}{\pi}P(r)dr = 0, \qquad \forall \rho\ge 0.
\end{equation}
As a consequence, the dynamics \eqref{EqP} reads
\begin{equation} \label{eq: Pre-Polaire}
    \begin{cases}
        \dot{v}_0=-v_0 + J_0 \Gamma_0^0(v_0,|v_{1:2}|) + I_0, \\[.5em]
        \dot{v}_{1:2}=-v_{1:2}+ J_1 \Gamma_0^1(v_0,|v_{1:2}|)\displaystyle\frac{v_{1:2}}{|v_{1:2}|} + I_{1:2}.
    \end{cases}
\end{equation}

The following lemma presents some basic facts about the non-linearities $\Gamma_{p}^j$ introduced in \eqref{def-Gammapj}.

\begin{Lemma} \label{Lemma: Gamma_p^i/ Inj Gamma_0^0}
The following assertions hold.
\begin{enumerate}[(i)]
    \item The quantities $\Gamma_{p}^{j}$ are well defined for all $j,p\in\mathbb{N}$ and belong to the class $C^{\infty}(\R\times[0,\infty),\R)$.
    \item  We have  the following relations
        \begin{equation}\label{eq:gamma-der}
            \partial_{v_0} \Gamma_{p}^{j}= \Gamma_{p+1}^{j}
            \quad\text{and}\quad \partial_{\rho} \Gamma_{p}^{j}= \Gamma_{p+1}^{j+1},
            \qquad\forall j,p\in \mathbb{N}.
        \end{equation}
    \item  For any $v_0\neq 0$, the function $\Gamma_0^0(v_0,\cdot)\in C^\infty([0,\infty),\R)$ is injective. %
    \item  For any $v_0\neq 0$ and $\rho >0$, we have $\B{1} \neq 0$ and of opposite sign w.r.t.~$v_0$. Moreover, $\BExpress{1} (v_0,0) = 0$ for any $v_0\in \R$.
    \item  For any $\rho\ge 0$, we have $\AExpress{0}(0,\rho) = 0$.
    \end{enumerate}
\end{Lemma}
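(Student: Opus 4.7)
The plan is to handle items (i), (ii), and (v) together with the easy part of (iv) by direct computation, then to address the sign statement in (iv) as the main technical step, and finally to deduce (iii) from (ii) and (iv). For (i) and (ii), the compact support of $P$ together with the smoothness and uniform boundedness of every derivative of $\sigma$ makes all derivatives of the integrand bounded uniformly on compact subsets of $\R\times[0,\infty)$. Dominated convergence then justifies differentiation under the integral sign to any order, giving both the $C^\infty$ regularity in (i) and the recurrences in (ii): $\partial_{v_0}$ simply replaces $\sigma^{(p)}$ by $\sigma^{(p+1)}$, while $\partial_\rho$ additionally brings out the factor $r\cos(2\theta)$ and raises $p$ by one. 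For (v), the substitution $\theta\mapsto\theta+\pi/2$ preserves the domain $[-\pi/2,\pi/2)$ modulo a null set and sends $\cos(2\theta)$ to $-\cos(2\theta)$; combined with the oddness of $\sigma$ at $v_0=0$ this negates the integrand, forcing $\Gamma_0^0(0,\rho)=-\Gamma_0^0(0,\rho)=0$. The identity $\Gamma_1^1(v_0,0)=0$ is even simpler: the integrand factorises as $\sigma'(v_0)\,r\cos(2\theta)P(r)$ and $\int_{-\pi/2}^{\pi/2}\cos(2\theta)\,d\theta=0$.

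The heart of the lemma is the sign statement in (iv). My plan is to exploit the symmetry between the two subsets of $[-\pi/2,\pi/2)$ selected by the sign of $\cos(2\theta)$: the change of variable $\theta'=\theta\pm \pi/2$ identifies $[\pi/4,\pi/2)\cup[-\pi/2,-\pi/4]$ with $[-\pi/4,\pi/4]$ and flips the sign of $\cos(2\theta)$. This allows one to rewrite
\begin{equation}
\Gamma_1^1(v_0,\rho) = \int_0^\infty\!\int_{-\pi/4}^{\pi/4} r\cos(2\theta)\bigl[\sigma'(v_0+r\rho\cos(2\theta))-\sigma'(v_0-r\rho\cos(2\theta))\bigr]P(r)\,\frac{d\theta\,dr}{\pi}.
\end{equation}
The convex-concave assumption $x\sigma''(x)\le 0$ together with the oddness of $\sigma$ (which makes $\sigma'$ even) forces $\sigma'$ to attain its global maximum at the origin and to be strictly decreasing in $|\cdot|$. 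Hence, whenever $v_0>0$ and $a:=r\rho\cos(2\theta)>0$, one has $|v_0+a|=v_0+a>|v_0-a|$, so $\sigma'(v_0+a)<\sigma'(v_0-a)$. The bracket is therefore strictly negative, and since $P$ is a genuine density the set $\{r>0,\ \theta\in(-\pi/4,\pi/4)\}$ has positive measure, yielding $\Gamma_1^1(v_0,\rho)<0$. The case $v_0<0$ is deduced from the identity $\Gamma_1^1(-v_0,\rho)=-\Gamma_1^1(v_0,\rho)$, itself obtained by applying the substitution $\theta\mapsto\theta+\pi/2$ to the definition and using the evenness of $\sigma'$.

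Finally, (iii) is an immediate corollary of (ii) and (iv): for fixed $v_0\ne 0$ we have $\partial_\rho\Gamma_0^0(v_0,\rho)=\Gamma_1^1(v_0,\rho)$, which by (iv) has constant non-zero sign on $(0,\infty)$, so $\rho\mapsto\Gamma_0^0(v_0,\rho)$ is strictly monotone and therefore injective. The main obstacle in the whole argument is the sign analysis in (iv); everything hinges on the fact that $\sigma'$ is maximised at zero, which is precisely the content of the convex-concave assumption on $\sigma$.
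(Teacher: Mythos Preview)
Your proof is correct and follows essentially the same route as the paper's: dominated convergence for (i)--(ii), parity of $\sigma$ for (v), and for (iv) the same symmetrisation under $\theta\mapsto\theta\pm\pi/2$ to reduce $\Gamma_1^1$ to an integral of $r\cos(2\theta)\bigl[\sigma'(v_0+a)-\sigma'(v_0-a)\bigr]$ whose sign is read off from the unimodality of $\sigma'$, after which (iii) follows from $\partial_\rho\Gamma_0^0=\Gamma_1^1$. If anything, you are more explicit than the paper in writing out the folding identity and in handling $v_0<0$ via the antisymmetry $\Gamma_1^1(-v_0,\rho)=-\Gamma_1^1(v_0,\rho)$ rather than repeating the sign analysis.
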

\begin{proof}
We start with points \textit{(i)} and \textit{(ii)}.
For for $j, p \in\mathbb{N}$, we set %
\begin{equation}
   \gamma_{p}^{j}(v_0,\rho,r,\theta):= \sigma^{(p)}\bigl(v_0 + r\rho \cos(2\theta) \bigr) r^j \cos(2\theta)^j P(r), \quad v_0 \in \R \ \rho \in [0,\infty), \ r \in [0,\infty), \ \theta \in \mathbb{S}^1,
\end{equation}
From our assumptions on $\sigma$ (see \eqref{Assumption Sigma}), 
for any fixed $\theta$ and almost every $r$, the function $\gamma_{p}^{j}(\cdot,r,\theta)$ is $C^{\infty}(\R\times[0,\infty),\R)$ %
and verifies
\begin{equation}
    \partial_{v_0}\gamma_{p}^{j}(\cdot,r,\theta)=\gamma_{p+1}^{j}(\cdot,r,\theta),
    \quad \text{and}\quad
    \partial_{\rho}\gamma_{p}^{j}(\cdot,r,\theta)=\gamma_{p+1}^{j+1}(\cdot,r,\theta).
\end{equation}
Furthermore, using the boundedness of $\sigma^{(p)}$ we see that
\begin{equation}
   | \sigma^{p}\bigl(v_0 + r\rho \cos(2\theta) \bigr) r^j \cos(2\theta)^j P(r) | \leq  \|\sigma^{(p)}\|_\infty r^j P(r).
\end{equation}
Using the fact that $P$ is a compactly supported probability density, the above allows to apply Lesbegue dominated convergence theorem, which yields the claim.

Point \textit{(iii)} is a direct consequence of point \textit{(iv)} as \textit{(ii)} implies $\partial_\rho\Gamma_{0}^{0}(v_0,\cdot) = \Gamma_1^1(v_0,\cdot)$, so we turn to a proof of \textit{(iv)}.

From Assumption~\ref{ass:sigma} on $\sigma$, by remarking that $\sigma'$ is even, strictly increasing on $(-\infty,0)$ and strictly decreasing on $(0,\infty)$ with $\sigma'(0)=\sup_{\mathbb{R}}\sigma'$, we get that: 
\begin{equation}
    \begin{cases}
     \sigma'(v_0+ r \rho U)-\sigma'(v_0- r \rho U)<0, \qquad \forall \rho>0, & \text{if} \ v_0>0 ,\\
     \sigma'(v_0+ r \rho U)-\sigma'(v_0- r \rho U)>0, \qquad \forall \rho>0, & \text{if} \ {v}_0<0, \\
     \sigma'(v_0+ r \rho U)-\sigma'(v_0- r \rho U)=0, \qquad  & \text{if} \  \rho {v}_0=0.
    \end{cases}
\end{equation}
We have then, using the fact that $r$ and $P(r)$ are positive, that
\begin{equation}
     \begin{cases}
     \B{1}<0, \qquad \forall \rho>0, & \text{if} \ v_0>0 ,\\
     \B{1}>0, \qquad \forall \rho>0, & \text{if} \ {v}_0<0\\
     \B{1}=0,  \qquad  & \text{if} \  \rho {v}_0=0.
    \end{cases}
\end{equation}
This completes the proof of \textit{(iv)}.

Finally, point \textit{(v)} follows at once from direct computations and the parity of $\sigma$, which yields
\begin{equation}
    \AExpress{0}(0,\rho) = \int_\Omega \sigma(r\rho\cos(2\theta))P(r)\frac{d\theta dr}{\pi} = 0.
\end{equation}
\end{proof}

\begin{remark}
    As already mentioned, our results are true without the compact support assumption on $P$. The results can be obtained with the assumptions that the maximal moment of $P$, which we denotes $m$, is greater then 3. Indeed, the above result still holds, supposing that the distribution $P$ has a moment $m$ greater or equal then 2. Indeed by replacing point \textit{(i)} 
    by the fact that $\Gamma_{p}^{j}$ is well defined for all $j \leq m$ and has regularity $C^{(m-j)}(\R\times[0,\infty),\R)$.
\end{remark}

We are now in a position to prove Propositions~\ref{Observability rho} and \ref{Proposition : Passage}.

\begin{proof}[Proof of Proposition~\ref{Observability rho}] \label{Proof: Observability rho}
Let $v$ and $\tilde{v}$ be two solutions of~\eqref{EqP}.
Let us prove \textit{(i)}. From Lemma~\ref{Lemma: Gamma_p^i/ Inj Gamma_0^0}, $\Gamma_0^0 \equiv 0$. Using Equation~\eqref{eq:Dynamique Polar Coordinate.}, the set $\mathcal{Z}_0=\{ v\in \R^3, \ v_0=0 \}$ is stable by the dynamic $f$ as $I_0(t)=0$ for all times $t\geq 0$. Therefore for solution initialised in the set $\mathcal{Z}_0$ the dynamic can be reduced to the close form
    \begin{equation}
        \dot{v}_{1:2}=f_{1:2}((v_0,v_1,v_2),I(t)), \qquad v_0=0,
    \end{equation}
    which admits a unique solutions for any initial condition $v_{1:2}(0) \in \R^2$ as $f$ is globally Lipschitz continuous from \ref{Solution defined Bounded}. There exist then solutions of \eqref{EqP} such that 
    \begin{equation}
        v_0(t)=\title{v}_0(t), \ \forall t \geq 0, \ \text{and} \ v(t) \neq \Tilde{v}(t), \ \forall \ t \geq 0.  
    \end{equation}
    
Let us now prove point \textit{ii}.
By assumption, it holds 
\begin{equation}
        v_0(t)=\tilde{v}_0(t) \quad \textrm{for}\quad t\in [0,t^*].
\end{equation}
In particular, $\dot v_0 \equiv \dot{\tilde{v}}_0$ on $(0,t^*)$. Hence, using the first equation of \eqref{eq: Pre-Polaire} and the fact that $J_0\neq 0$, we obtain that
\begin{equation}
    \AExpress{0}(v_0(t),\rho(t))=\AExpress{0}(v_0(t),\Tilde{\rho}(t)) \quad \textrm{for}\quad t\in [0,t^*].
\end{equation}

For $t\in \mathcal I = \{t\in [0,t^*]:\: v_0(t)\neq 0\}$, point \textit{(iii)} of Lemma~\ref{Lemma: Gamma_p^i/ Inj Gamma_0^0} guarantees that $\Gamma_0^0(v_0(t),\cdot)$ is injective, and thus that
\begin{equation}
    \rho(t)=\Tilde{\rho}(t), \quad \text{for} \quad t \in \mathcal{I}.
\end{equation}
This proves that $|v_{1:2}| = |\tilde{v}_{1:2}|$ for $t\in \mathcal{I}$. Differentiating this equality, which is possible since $\mathcal{I}$ is a relatively open subset of $[0,t^*]$, and using again \eqref{eq: Pre-Polaire} allows to conclude the first step of the proof.
\end{proof}

\begin{proof}[Proof of Proposition~\ref{Proposition : Passage}]

From \eqref{def-Gammapj} we have $\Gamma_0^0(v_0,0)=\sigma(v_0)$. From Lemma~\ref{Lemma: Gamma_p^i/ Inj Gamma_0^0} one has for all $\rho \geq 0$ that $\Gamma_1^1(v_0,\rho) \leq  0$ (respectively  $\Gamma_1^1(v_0,\rho) \geq  0$) for all $v_0 \geq 0$ (respectively for all $v_0 \leq 0$). From the fact that $\partial_{\rho} \Gamma_0^0=\Gamma_1^1$ and by continuity, this yields  
\begin{equation}
    |\Gamma_0^0(v_0,\rho)| \leq |\sigma(v_0)|, \quad \forall (v_0,\rho) \in \R\times [0,\infty).
\end{equation}
From  Assumption~\ref{ass:sigma} on $\sigma$, one has $|\sigma(v_0)| \leq \sigma'(0)|v_0|$. Substituting this in \eqref{eq: Pre-Polaire}, together with the lower bound on $I_0(t)$ yields 
\begin{equation} \label{eq: Estimation v_0}
    \dot v_0\geq -|v_0|- |J_0| \sigma'(0) |v_0|+c,
\end{equation} 
whence, for any solution $v(\cdot)$ of \eqref{EqP} with an input satisfying the assumptions of the proposition,
\begin{equation}
    \label{eq:ineq_f0_pf}
\dot v_0(t)>0 
\quad \text{for all $t$ such that}\quad
|v_0(t)|<\delta^*\,.
\end{equation}
Similarly, the time estimation is obtained from applying Grönwall's lemma to \eqref{eq: Estimation v_0}.

Now, consider a solution $v(\cdot)$ and remember that $\delta>0$ is smaller than $\delta^*$. 
 If $v_0(0)<-\delta$, either $v_0(0)$ remains smaller than $-\delta$ for all time, and we are in the first case of the proposition, or there is a smaller time $t_1>0$ such that $v_0(t_1)=-\delta$, and in that case, \eqref{eq:ineq_f0_pf} implies that we must be in the second case of the proposition because $v_0(t)$ increases as long as it is smaller than $\delta^*$, hence it passes through $\delta$ at some finite time $t_2$ and never becomes smaller than $\delta$ in the future.
If $-\delta\leq v_0(0)<\delta$, by the same argument, there is a time $t_2$ so that $v_0(t)$ has the behavior describes as the third case of the Proposition, and if $v_0(0)\geq\delta$, which is the last possibility, the same arguments tell us that $v_0(t)$ is larger than $\delta$ for all positive $t$.
\end{proof}
\section{The observability mapping} \label{Section: Observability Mapping}

In this section we focus on the observability of system \eqref{EqP}. For any solutions of \eqref{EqP} with measurement $h(v)=v_0$, we want to show the injectivity between the function $v_0(\cdot)$ and its time derivatives, up to certain order, with the solution $v(\cdot)$.

To do so, we look at the properties of the mapping $T$ defined in~\eqref{eq:Introduction T}, and in particular we aim at constructing the pseudo-inverse of Theorem~\ref{Theorem: inverse mapping}. In order to do so, we start by extending $T_t$ to a diffeomorphism, for each time $t \ge 0$.

\subsection{Observability mapping extension and differential observability} 
\label{Section 3.2}

From the symmetry of the system, we want to study $T$ from a polar coordinate point of view. 
Letting $X= (v_0, \rho, e_\phi)^\top \in \R \times (0,\infty) \times \{ \zeta \in \R^2 : |\zeta|^2=1 \}$,
(recall $v_{1:2}=\rho e_{\phi}$ with $e_\phi=(\cos\phi,\sin\phi)^\top$), we obtain from \eqref{eq: Pre-Polaire} the following dynamics in polar coordinates
\begin{equation} \label{eq:Dynamique Polar Coordinate.}
    \dot{X}= \begin{bmatrix}
        -v_0 + J_0\A{0} + I_0 \\
        -\rho + J_1 \B{0} + I_{1:2}^{\top} e_{\phi} \\
        \frac{1}{\rho}( -( I_{1:2}^{\top}e_{\phi} )  e_{\phi} +    I_{1:2})
        \end{bmatrix}.
\end{equation}

We can naturally extend \eqref{eq:Dynamique Polar Coordinate.} by changing the variable $e_\phi \in S^1$, where $S^1$ is the unit circle of $\R^2$ to $\zeta \in \R^2$. We obtain then a new dynamic  $\dot X = F(X,t)$ on $\R \times (0,+\infty) \times \R^2$ as follow

\begin{equation} \label{eq: F extention}
    F(X,t)= \begin{bmatrix}
        -v_0 + J_0\A{0} + I_0 \\
        -\rho + J_1 \B{0} + I_{1:2}^{\top} \zeta \\
        \frac{1}{\rho}( -( I_{1:2}^{\top}\zeta )  \zeta   +    I_{1:2}) \end{bmatrix},
    \qquad\text{where}\quad
    X = \begin{pmatrix}
        v_0 \\ \rho \\ \zeta
    \end{pmatrix}\in\R \times (0,+\infty) \times \R^2 .
\end{equation}

We define
\begin{equation}
    \mathcal{X}:=\mathbb{R}^*\times (0,\infty) \times  \mathbb{R}^2.
\end{equation}

Let us assume that $I\in C^2([0,+\infty),\R^3)$. With $\D$ the operator defined in \eqref{eq: operator differentiel D}, and omitting the arguments of $\Gamma_p^j$ (defined in~\eqref{def-Gammapj}),
we define 
$S_{t}:\mathcal{X}\to\mathbb{R}^4$ by
\begin{equation}
    \label{eq:St-expression}
    \begin{split}
    [S_t(X)]_0=& \ v_0 \\ 
    [S_t(X)]_1=&-v_0 + J_0 \Gamma_0^0   + I_0\\
    [S_t(X)]_2=&- F_0 +  J_0\Gamma_1^0  F_0 
    + J_0\Gamma_1^1 F_1  + \dot I _0 \\ 
    [S_t(X)]_3=&
    -\D F_0 +J_0 \Gamma_1^0 \D F_0 + 
    J_0\Gamma_2^0  (F_0)^2 \\& + 2 J_0\Gamma_2^1 F_1  F_0 + 
    J_0\Gamma_1^1 \D F_1  + J_0\Gamma_2^2 ( F_1 )^2 + \ddot I_0.
    \end{split}
\end{equation}
From direct computations, one can see that 
for any $t\ge 0$, any $X\in \R^* \times (0,+\infty) \times \{\zeta\in\mathbb{R}^2:\:|\zeta|=1\}\to \R^4$ it holds
\begin{equation}
       S_t\left(X \right) := T_t(X_0,X_1X_{2:3}). 
\end{equation}

Recall that by Assumption~\ref{Strong Persistence} we can fix an arbitrary large $R>0$ such that $B_{\R^3}(0,R)$ is an invariant attracting set for \eqref{EqP}. 
For any $\delta>0$, $\eta>0$,  we set
    \begin{equation}
        \mathcal{X}_{\delta,\eta} = \{ (v_0,\rho,\zeta)\in \mathcal X :\: \delta\le |v_0| \le R,\, \eta\le \rho\le R,\, |\zeta| \le \ R\}.
    \end{equation}
We then have the following.

\begin{thm} \label{Theorem 1 S inj imm}
    Assume $I\in C^{2}([0,+\infty),\R^3)$. Let 
    $t\ge 0$ be such that $I_{1:2}(t)\wedge \dot I_{1:2}(t)\neq 0$. 
    The map $S_{t}:\mathcal{X}\to\mathbb{R}^4$ is a $C^\infty$  
    diffeomorphism onto its image.
    In particular, under Assumption~\ref{Strong Persistence} the map $S_t$ is invertible for all times $t\ge 0$ and 
    the inverse $S_t^{-1}: S_t(\mathcal{X}) \mapsto \mathcal{X}$ satisfies
    \begin{equation}
        \sup_t \left\{\|\partial_z S_t^{-1}(z) \| : z\in S_t(\mathcal{X}_{\delta,\eta})\right\} < \infty.
    \end{equation}
\end{thm}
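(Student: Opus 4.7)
The plan is to establish three properties of $S_t$: it is smooth, it is injective, and its differential is invertible everywhere on $\mathcal{X}$. Together these imply $S_t$ is a $C^\infty$ diffeomorphism onto its open image by the inverse function theorem. The uniform bound on $\partial_z S_t^{-1}$ over $S_t(\mathcal{X}_{\delta,\eta})$ will then follow from continuity of the Jacobian, a uniform lower bound on $|\det \partial_X S_t|$, and compactness, using Assumption~\ref{Strong Persistence}.

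Smoothness of $S_t$ is immediate from point (i) of Lemma~\ref{Lemma: Gamma_p^i/ Inj Gamma_0^0} together with the $C^2$ regularity of $I$. For the Jacobian, I would exploit that $[S_t]_0$ and $[S_t]_1$ depend only on $(v_0,\rho,t)$, since the only $\zeta$-dependence enters through $F_1 = -\rho + J_1\Gamma_0^1 + I_{1:2}^\top \zeta$. This gives a block-triangular structure
\[
\partial_X S_t = \begin{pmatrix} A & 0 \\ B & C \end{pmatrix}, \qquad A = \begin{pmatrix} 1 & 0 \\ -1 + J_0\Gamma_1^0 & J_0\Gamma_1^1 \end{pmatrix},
\]
with $\det A = J_0 \Gamma_1^1(v_0,\rho)$, nonzero on $\mathcal{X}$ by Lemma~\ref{Lemma: Gamma_p^i/ Inj Gamma_0^0}(iv). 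A direct computation yields $\partial_\zeta [S_t]_2 = J_0 \Gamma_1^1\, I_{1:2}^\top$. In $\partial_\zeta [S_t]_3$, the sole source of $\dot I_{1:2}^\top$ is the explicit time-derivative $\partial_t F_1 = \dot I_{1:2}^\top \zeta$ inside $\D F_1$, carried outside by the prefactor $J_0\Gamma_1^1$; every other contribution is a scalar multiple of $I_{1:2}^\top$. Hence $\partial_\zeta [S_t]_3 = \alpha(X,t)\, I_{1:2}^\top + J_0 \Gamma_1^1\, \dot I_{1:2}^\top$ for some scalar $\alpha$, and a single row operation gives $\det C = (J_0\Gamma_1^1)^2 (I_{1:2}\wedge \dot I_{1:2})$. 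Therefore $\det \partial_X S_t = (J_0\Gamma_1^1)^3 (I_{1:2}\wedge \dot I_{1:2})$, nonzero on $\mathcal{X}$ under the hypothesis.

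For injectivity, suppose $S_t(X) = S_t(X')$. The first component gives $v_0 = v_0'$; then $[S_t]_1 = [S_t]_1'$ reduces to $\Gamma_0^0(v_0,\rho) = \Gamma_0^0(v_0,\rho')$, and Lemma~\ref{Lemma: Gamma_p^i/ Inj Gamma_0^0}(iii) applied at $v_0\neq 0$ forces $\rho = \rho'$. With $(v_0,\rho)$ matched, $[S_t]_2$ is affine in $\zeta$ with slope $J_0\Gamma_1^1\, I_{1:2}^\top$, so $[S_t]_2 = [S_t]_2'$ yields $I_{1:2}^\top(\zeta - \zeta') = 0$. Every $\zeta$-dependence in $[S_t]_3$ beyond the isolated linear contribution $J_0\Gamma_1^1\, \dot I_{1:2}^\top \zeta$ from $\D F_1$ is a polynomial in the scalar $I_{1:2}^\top \zeta$; since $I_{1:2}^\top\zeta = I_{1:2}^\top\zeta'$, these terms cancel in the difference, leaving $J_0\Gamma_1^1\, \dot I_{1:2}^\top(\zeta - \zeta') = 0$. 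Linear independence of $I_{1:2}$ and $\dot I_{1:2}$ then forces $\zeta = \zeta'$.

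Combining smoothness, injectivity, and invertible Jacobian gives that $S_t$ is a $C^\infty$ diffeomorphism onto its open image. For the uniform bound, the set $\{(v_0, \rho, \zeta):\delta\leq |v_0|\leq R,\ \eta\leq \rho \leq R,\ |\zeta|\leq R\}$ is compact and avoids $\{v_0 = 0\}\cup\{\rho=0\}$; continuity of the $\Gamma_p^j$ provides uniform upper bounds on the entries of $\partial_X S_t$ and a uniform lower bound $|\Gamma_1^1|\geq c_0 > 0$, while Assumption~\ref{Strong Persistence} supplies uniform bounds on $I,\dot I,\ddot I$ and $|I_{1:2}\wedge \dot I_{1:2}|\geq \mu > 0$. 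These yield a uniform lower bound on $|\det \partial_X S_t|$ and an upper bound on $\|\partial_X S_t\|$, hence on $\|\partial_z S_t^{-1}\| = \|(\partial_X S_t)^{-1}\|$ over $\mathcal{X}_{\delta,\eta}\times [0,\infty)$. The main obstacle is the algebraic bookkeeping needed to verify that $\dot I_{1:2}^\top$ appears only via the stated channel in $\partial_\zeta [S_t]_3$ and to confirm the cancellation of the $(I_{1:2}^\top \zeta)$-polynomial terms in the injectivity step; once this structure is spotted, the rest is routine.
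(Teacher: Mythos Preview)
Your proposal is correct and follows essentially the same approach as the paper's proof: the same block-triangular Jacobian structure (with diagonal blocks governed by $J_0\Gamma_1^1$ and the matrix $\begin{bmatrix} I_{1:2}^\top \\ \dot I_{1:2}^\top \end{bmatrix}$), the same component-by-component injectivity argument via Lemma~\ref{Lemma: Gamma_p^i/ Inj Gamma_0^0}, and the same compactness-plus-Assumption~\ref{Strong Persistence} bound on the inverse Jacobian. The one minor presentational difference is in Step~3: the paper factors $\partial_X S_t = M\,N(I)$ to isolate the time dependence before inverting, while you bound $\|(\partial_X S_t)^{-1}\|$ directly via the determinant and the entrywise bounds; both arguments are equivalent here.
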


\begin{proof}\label{Preuve Diffeo S}
    Let $t\ge 0$ be such that $I_{1:2}(t)\wedge \dot I_{1:2}(t)\neq 0$.
    The map $S_t$ is defined as composition of the functions $\Gamma_{p}^{j}$ and $\D^{i}F$, $ 0 \leq i \leq 2$. From equation~\eqref{eq: F extention}, $F$ is defined for all $X \in \mathcal{X}$.
    As such, it is $C^\infty$ over $\mathcal{X}$.
   In the following we denote elements of $\mathcal{X}$ by $X=(v_0, \rho, \zeta)$, where $v_0\in \R^*$, $\rho\in (0,+\infty)$, and $\zeta\in \R^2$.
    
\textit{Step 1. Injectivity of $S_t$.}
Let $X,\Tilde{X}\in \mathcal{X}$ be such that $S_t(X)=S_t(\tilde{X})$. We will compare term by term the expression of $S_t$ given in \eqref{eq:St-expression}. Observe that since $v_0\neq 0$ by definition of $\mathcal X$, Lemma~\ref{Lemma: Gamma_p^i/ Inj Gamma_0^0} implies injectivity of $\Gamma_0^0(v_0,\cdot)$ over $(0,\infty)$ and $\Gamma_1^1(v_0,\cdot)\neq 0$.

By the expression of the first two components of $S_t$, we immediately have $$v_0=\Tilde{v}_0
\quad\text{and}\quad
\rho=\Tilde{\rho}.$$
Using the expression \eqref{eq: F extention} of $F$ one verifies that all terms in the expression of $[S_t]_2$ depend only on $\rho$ and $v_0$,  except $\B{1}F_1(X,t)$. The fact that $\B{1}\neq 0$, and the expression of $F_1(X,t)$ then implies that
\begin{equation}\label{eq:boh}
\IT\zeta=\IT \Tilde{\zeta}.
\end{equation}

We now turn to the expression of $[S_t]_3$. By using again the expression \eqref{eq: F extention} of $F$, eliminating all terms that depend only on $\rho$ and $v_0$, and using the fact that $\B{1}\neq 0$, we obtain $\D F_1(X,t)=\D F_1(\tilde X,t)$, which is the term that contains the last unknown term. 
Developing this expression and using \eqref{eq:boh}, we obtain
\begin{equation}\label{eq:boh2} 
\dot{I}_{1:2}^{\top}\zeta=\dot{I}_{1:2}^{\top} \Tilde{\zeta}.
\end{equation}

Putting together \eqref{eq:boh} and \eqref{eq:boh2}, we finally obtain 
\[
\begin{bmatrix}
    \IT \\ \dot{I}_{1:2}^{\top}
\end{bmatrix} \begin{bmatrix}
    \zeta - \Tilde{\zeta}
\end{bmatrix}=0.
\]
This implies that $\zeta=\tilde \zeta$ since $I_{1:2}(t)\wedge \dot I_{1:2}(t)\neq 0$.
This ends the proof of the injectivity.

\textit{Step 2. $S_t$ is a diffeomorphism onto its image.}
As we have seen that $S_t$ is differentiable and injective, by the rank theorem we just need to show that its differential $\partial_X S_t$ is of maximal rank on $\mathcal{X}$. 
For any $X = (v_0,\rho,\zeta) \in \mathcal{X}$, direct computations yield
\begin{equation}
    \label{eq:DXSt}
    \partial_X S_t(X)=\begin{bmatrix}
        1 & 0 & 0_{1\times2}\\
        \star &  J_0\B{1} & 0_{1\times2} \\
        \star & \star & J_0\B{1} G(v_0,\rho,\zeta) \\
    \end{bmatrix}.
\end{equation}
Here, we let $0_{1\times 2} = (0,0)\in \R^{1\times 2}$, and
\begin{equation}
\label{eq:GI}
    G(v_0,\rho,\zeta):= \begin{bmatrix}
    \IT \\
    \dot{I}_{1:2}^{\top}
\end{bmatrix}
 + \tilde U(v_0,\rho,\zeta)
 \begin{bmatrix}
     0_{1\times 2} \\ \IT
 \end{bmatrix}
\end{equation}
for some function $\tilde U$. We omit the expressions noted as $(\star)$ in \eqref{eq:DXSt} as they do not intervene at this level. 

As we are under \eqref{Assumption Sigma} and $X \in \mathcal{X}$, we have $\B{1} \neq 0$ by Lemma ~\ref{Lemma: Gamma_p^i/ Inj Gamma_0^0}. Therefore as the Jacobian matrix of $S$ is triangular by block, to prove that it is invertible we are left to prove the invertibility of $G(v_0,\rho,\zeta)$.
This follows by assumption, since
\begin{equation}
    \label{eq:det-U}
    \det G(v_0,\rho,\zeta) = \dot{I}_{1:2}\wedge I_{1:2} \neq 0
\end{equation}
Therefore $S_t$ is an immersion in $\mathcal{X}$.

\textit{Step 3. Estimates on the inverse with respect to time.}
The inverse diffeomorphism $S_t^{-1}$ defined on $S_t(\mathcal{X})$ is such that
$\partial_z S_t^{-1}(z) = (\partial_X S_t(S_t^{-1}(z)))^{-1}$. Since the dependence on $t$ of $\partial_X S_t$ is only due to the presence of $I_{1:2}$, $\dot I_{1:2}$, $I_0$ and $\dot I_0$, and recalling that $J_0\BExpress{1}$ is non null, we rewrite it as follows (we omit the dependencies on $(v_0,\rho,\zeta)$):
\begin{equation}
    \label{eq:DXSt2}
    \partial_X S_t(X)=\begin{bmatrix}
        1 & 0 & 0_{1\times2}\\
        \alpha &  J_0\BExpress{1} & 0_{1\times2} \\
         J_0\BExpress{1}\beta(I) &  J_0\BExpress{1} \gamma(I) &  J_0\BExpress{1} G(I) \\
    \end{bmatrix} 
    = 
    \underbrace{
    \begin{bmatrix}
        1 & 0 & 0_{1\times2}\\
        \alpha & J_0\BExpress{1} & 0_{1\times2} \\
        0 & 0 & J_0\BExpress{1}  \operatorname{Id}_{2} \\
    \end{bmatrix} 
    }_{=:M}
    \underbrace{
    \begin{bmatrix}
        1 & 0 & 0_{1\times2}\\
        0 & 1 & 0_{1\times2} \\
        \beta(I) & \gamma(I) & G(I)  \\
    \end{bmatrix}
    }_{=:N(I)},
\end{equation}
for some function $\alpha$ independent of the input $I$ and functions $\beta,\gamma$ depending on $I$ and its first derivative. Here, we denoted by $G(I)$ the matrix in \eqref{eq:GI}, in order to stress its dependency on $I$.
This expression yields that 
\[
|\partial_z S_t^{-1}| \le |N(I)^{-1}| |M^{-1}| 
\]
Observe that $M$ is continuous on $\mathcal X$ (albeit singular if $\rho v_0 = 0$), and thus $\|M^{-1}\|_{S_t(\mathcal{X}_{\delta,\eta})}\le c$ for some constant $c>0$ depending only on $\delta$ and $\eta$. Moreover,
\[
N(I)^{-1}
=
\begin{bmatrix}
        1 & 0 & 0_{1\times2}\\
        0 & 1 & 0_{1\times2} \\
        -G^{-1}(I)\beta(I) & -G^{-1}(I)\gamma(I) & G^{-1}(I)   \\
    \end{bmatrix}.
\]
The lower bound on the determinant of $G$ given by Assumption~\ref{Strong Persistence} together with the fact that $I$ and $\dot I$ are uniformly bounded with respect to time allows then to conclude.
\end{proof}

We now prove Theorem~\ref{prop:T-inj} as a corollary of Theorem~\ref{Theorem 1 S inj imm}. Recall the notations $\mathcal{Z}_0=\{ v \in \R^3, v_0=0\}$ and  $\mathcal{Z}_{1:2}=\{v\in \R^3, v_1=v_2=0\}$.

\begin{proof}[Proof of Theorem~\ref{prop:T-inj}]
Let $v \in \R^3\setminus \mathcal{Z}_0$. 
Suppose at first $\rho=|v_{1:2}| >0 $. We have by construction that $T_t(v)=S_t(X)$ where $X=(v_0,\rho,\frac{v_{1:2}}{\rho})\in \mathcal{M}:=\R^* \times (0,+\infty) \times \{|\zeta|=1\}$. Since Theorem~\ref{Theorem 1 S inj imm} guarantees that $S_t$ is a diffeomorphism of $\mathcal{X}$ on $S_t(\mathcal X)$, it is therefore an injective immersion when restricted to the sub-manifold of dimension 3 $\mathcal{M} \subset \mathcal{X}$. 
It follows that  $T_t$ is also an injective immersion for $v \in \R^3 \setminus (\mathcal{Z}_0\cap \mathcal{Z}_1)$.

If $|v_{1:2}|=0$, let $\tilde v$ be such that $T_t(v)=T_t(\tilde{v})$. By computing the first two elements of the expression of $T_t$, one deduces that $\AExpress{0}(v_0,|v_{1:2}|)=\AExpress{0}(v_0,|\tilde{v}_{1:2}|)$. We deduce from Lemma~\ref{Lemma: Gamma_p^i/ Inj Gamma_0^0} that this function is injective, hence that $|v_{1:2}|=0$. Thus, $v=\Tilde{v}$ and $T_t$ is indeed injective on $\R^3 \setminus \mathcal{Z}_0$.  
\end{proof}

We can now prove the observability of the system as stated in Proposition~\ref{Observability Theorem}.

\begin{proof}[Proof of Proposition~\ref{Observability Theorem}]
We start by proving \textit{(ii)} $\Rightarrow$ \textit{(i)}.
To this aim, let $v$ and $\Tilde{v}$ be two solutions of \eqref{EqP} such that $h(v(t))=h(\tilde v(t))$ for all $t\in [0,t^*]$. By the definition of $T_t$ in \eqref{eq:Introduction T} we have that $T_t(v(t))=T_t(\Tilde{v}(t))$ for all $t\in [0,t^*]$. 
By assumption and by continuity of $t\mapsto I_{1:2}(t)\wedge \dot I_{1:2}(t)$ there exists an open interval {$\mathcal{I}\subset [0,t^*]$} such that $I_{1:2}(t)\wedge \dot I_{1:2}(t)\neq 0$ for all $t\in \mathcal{I}$.  
By the fact that $I_0(t)\ge c$ and Proposition~\ref{Proposition : Passage}, it follows that there exists $t_0\in \mathcal{I}$ such that $v_0(t_0)\neq 0$. 
Thus, by Theorem~\ref{prop:T-inj} it follows that $T_{t_0}$ is injective. 
This yields $v(t_0)=\tilde v(t_0)$, which proves the claim by uniqueness of solutions of \eqref{EqP}.

We now turn to an argument \textit{(i)} $\Rightarrow$ \textit{(ii)}.
In this case, due to the assumption that $
I_0(t)>c $ for all $t\ge 0$, we get from Proposition~\ref{Proposition : Passage} that there exist at most a single time $t_c \geq 0$ where $v(t_c)=0$. From Proposition~\ref{Observability rho} we have that $|v_{1:2}|=|\Tilde{v}_{1:2}|$ and $\I^{\top} v= \I^{\top} \Tilde{v}$ a.e. 
If $v$ and $\tilde v$ are two distinct solutions of \eqref{EqP} such as $v_0=\tilde v_0$ and $I_{1:2}(0)^{\top} \bigl( v_{1:2}(0)-\tilde{v}_{1:2}(0))=0$ then it is direct to see that
\begin{equation}
    h(v(t))=h(\tilde{v}(t)), \ \forall t\ge 0  \ \And \ v(t)\neq\Tilde{v}(t), \ \forall t \ge 0,
\end{equation}
which ends the proof.
\end{proof}

\subsection{The pseudo-inverse}  \label{Section 3.3}

The goal of this section is to construct a pseudo-inverse of the observability mapping $T_t$, that is, to construct a  Lipschitz map $\mathfrak{T}_t:\R^4\to \R^3$ such that $\mathfrak{T}_t\circ T_t$ is the identity on (most of) $\R^3$. This map $\mathfrak{T}_t$ is a fundamental building block of our observer.
Under the assumptions of Theorem~\ref{Theorem 1 S inj imm}, it is not difficult to provide an inverse of the map $T_t$ since, letting $\Phi(X):=(X_0,X_1 X_2,X_1 X_3)$ for $X\in\R^4$, it holds
\begin{equation}
    \label{eq:t-true-inverse}
    \Phi \circ S^{-1}_t \circ T_t = \operatorname{Id} \quad \text{on } \mathbb{R}^3\setminus\mathcal{Z}.
\end{equation}
The above cannot be directly used to obtain a Lipschitz inverse since, as can be seen in the proof of Theorem~\ref{Theorem 1 S inj imm}, the Lipschitz constant of $S_t^{-1}$ blows up when $\rho$ or $v_0$ are small.
However, $S_t^{-1}$ is Lipschitz on $S_t(\mathcal{X}_{\delta,\eta})$, which suggests defining an inverse via \eqref{eq:t-true-inverse} by inserting a projection on $S_t(\mathcal{X}_{\delta,\eta})$ in the definition. However, due to the complicated structure of $S_t(\mathcal{X}_{\delta,\eta})$, this is unpractical, and thus we resort to the following definition for the pseudo-inverse $\mathfrak{T}_t$:
\begin{equation}
    \label{eq:t-inverse}
    \mathfrak{T}_t = \Phi \circ \Sat \circ S_t^{-1} \circ \Pi_t.
\end{equation}
Here, $\Pi_t$ is a projection depending on the output $y(t)=v_0(t)$ and $\Sat$ is an appropriate cut-off function, bounding the non-linear part in the observer and thus insuring that trajectories do not explode in finite time.

Namely, observe that
a necessary condition for $z$ to belong to $S_t(\mathcal{X}_{\delta,\eta})$ is that 

\begin{equation}
    \label{eq:necessary}
 |z_0|>\delta, 
 \qquad\text{and}\qquad
\begin{cases}
    J_0\AExpress{0}(z_0,R) \leq  z_1 + z_0 - I_0 \leq J_0\AExpress{0}(z_0,\eta)& \text{ if $z_0>0$,}
    \\
   J_0\AExpress{0}(z_0,\eta)\leq  z_1 + z_0 - I_0 \leq  J_0\AExpress{0}(z_0,R)& \text{ if $z_0<0$.}
\end{cases}
\end{equation} 
Hence, $\Pi_t:\mathbb{R}^4\to \mathbb{R}^4$ is defined as the projection on the above set. That is, 

\begin{eqnarray}
    \Pi_t(z)_0 &=& 
    \begin{cases}
        \min \{ R,  \max\{\delta, z_0\} \} & y(t) \ge 0, \\
        \max \{ -R, \min \{ -\delta,z_0\} \} & y(t) < 0, 
    \end{cases}\\
    \Pi_t(z)_1 &=&
    \begin{cases}
        g_{\eta,R}(\Pi_t(z)_0,z_1), \qquad y(t) \ge  0,  \\
        g_{R,\eta}(\Pi_t(z)_0,z_1), \qquad y(t) < 0,
    \end{cases}\\
    \Pi_t(z)_2 &=& z_2, \\
    \Pi_t(z)_3 &=& z_3.
\end{eqnarray}
Here, we let the auxiliary function $g$ to be defined by
\begin{equation}
    g_{\rho_1,\rho_2}(z_0,z_1) = 
    \begin{cases}
        J_0\AExpress{0}(z_0,\rho_1) -z_0+ I_0& \qquad \text{ if } z_1 > J_0\AExpress{0}(z_0,\rho_1) -z_0+ I_0,\\
        J_0\AExpress{0}(z_0,\rho_2) -z_0 + I_0& \qquad \text{ if } z_1 < J_0\AExpress{0}(z_0,\rho_2) -z_0 + I_0,\\
        z_1 & \qquad \text{otherwise.}
    \end{cases}
\end{equation}
It is clear that $\Pi_t$ is a Lipschitz function with Lipschitz constant uniformly bounded w.r.t.~time. 

The image of the projection $\Pi_t$ fails to be in $S_t(\mathcal{X}_{\delta,\eta})$ due to not respecting the $R$ upper bound in its last two components. 
Although we can still inverse $S_t^{-1}$ on the image of $\Pi_t$, 
in order to avoid solutions of the observer \eqref{Observer 1} from blowing up in finite time, we add an additional cut-off. 
Let $p:\R\to [0,1]$ be a smooth function such that $p(x)=1$ if $|x|\leq R-1$, $p(x)=0$  if $|x|\geq R$ (recall $R$ has been assumed to be arbitrarily large, and in particular larger than 3). Then
\begin{equation}\label{E:sat}
    \Sat(X) = 
    (X_0,X_1, p(|X_{2:3}|)X_{2:3}).
\end{equation}

We are now in a position to prove Theorem~\ref{Theorem: inverse mapping}.

\begin{proof}[Proof of Theorem~\ref{Theorem: inverse mapping}]
For each $t \geq 0$, the function $ \mathfrak{T}_t: \mathbb{R}^4 \to \mathbb{R}^3$ introduced in Equation~\eqref{eq:t-inverse} is well defined. Since $\Pi_t(\R^4) = S_t(\{X \in \R^4, \ |v_0|>\delta,\eta<\rho<R, \zeta \in \R^2 \})$ where the inverse $S_t^{-1}$ is well defined and $C^{\infty}$ due to Theorem~\ref{Theorem 1 S inj imm}, it is a composition of continuous functions. 
The image of the map $\mathfrak{T}_t$ is
\begin{equation}
   \mathfrak{T}_t(\R^4)= \Phi \circ \Sat \circ S_t^{-1} \circ \Pi_t (\R^4) =\Phi ( \SetXdelta) \subset \{ \delta \leq |v_0|\leq R \} \times B_{\R^2}(0,R^2).
\end{equation}
Moreover $\mathfrak{T}_t$ is globally Lipschitz continuous. Indeed,
it is a composition of locally Lipschitz functions. The map $S_t^{-1}\circ \Pi_t$ is not globally Lipschitz continous however (due to the last two components).
By construction of the cut-off function, we have
\begin{equation} \label{eq:norm eq}
 \lVert \partial_z \Sat\circ S_t^{-1} \rVert_{ \Pi_t(\R^4)} 
 =
 \lVert (\partial_X \Sat) (\partial_z S_t^{-1}) \rVert_{S_t(\SetXdelta)} 
 \leq 
 \lVert \partial_X \Sat \rVert \
 \lVert \partial_z S_t^{-1} \rVert_{S_t(\SetXdelta)}.
\end{equation}
Since $\Phi$ is smooth and thus locally Lipschitz continuous, and  $\Sat  \circ   S_t^{-1} \circ \Pi_t(\R^4) \subset \SetXdelta$, we have by construction
that $\mathfrak{T}_t$ is a globally Lipschitz continuous map for every time $t$. We also have that  $\Pi_t$ being globally Lpischitz continous uniformly with respect to time. Hence, as a consequence of \eqref{eq:norm eq} and Theorem~\ref{Theorem 1 S inj imm}, Lipschitz constant for $\mathfrak{T}_t$ exhibits uniform boundedness with respect to time.

    We now provide an argument for \eqref{eq:estimate-t-inverse}. Let $z=T_t(v)$ for some $v\in \mathbb{R}^3$ such that $v_0$ has the same sign as the input $y(t)$ and $|v_0|>\delta$. If $\eta<|v_{1:2}|\le R$, since $\Pi_t(z)=z$,  $S_t^{-1}(z)=\left(v_0,|v_{1:2}|,\frac{v_{1:2}}{|v_{1:2}|} \right)$. It is then immediate to check that \eqref{eq:t-inverse} reduces to \eqref{eq:t-true-inverse}, that is $\mathfrak{T}_t(z) = v$.
    On the other hand, if $0<|v_{1:2}|\le \eta$, direct computations show that
    \begin{equation}
        \mathfrak{T}_t(z)-v = \left(0, (\eta -|v_{1:2}|) \frac{v_{1:2}}{|v_{1:2}|} \right) \implies |\mathfrak{T}_t(z)-v|\le \eta.
    \end{equation}
    By continuity, the above inequality extends to $|v_{1:2}|=0$.
\end{proof}

\section{Observer convergence} 
\label{Section 4}

In this section we prove Theorem~\ref{Theorem Observer 1} concerning the convergence properties of Observer~\eqref{Observer 1}.

We need the following Lemma, insuring that the non-linear part of the observer is globally Lipschitz.

\begin{Lemma} \label{Lemma:Non Linear Part Lipschitz}
Under Assumption~\ref{Strong Persistence}, for every $t \geq 0$ the map $(\D^4h )_t\circ \mathfrak{T}_{t}:\mathbb{R}^4\to \mathbb{R}$
is Lipschitz continuous uniformly with respect to time. Here, we let $(\D^4 h)_t:=(\D^4 h)(\cdot,t)$. 
\end{Lemma}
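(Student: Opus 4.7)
The plan is to write $(\D^4 h)_t \circ \Teinv$ as the composition of two maps that are each globally Lipschitz with constant uniform in $t$. The right factor $\Teinv : \R^4 \to \R^3$ is handled directly by Theorem~\ref{Theorem: inverse mapping}, which provides a uniform Lipschitz constant and tells us that the image of $\Teinv$ is contained in the bounded set $K := \{ u \in \R^3 : \delta \leq |u_0| \leq R,\, \eta \leq |u_{1:2}| \leq R^2 \}$. The task thus reduces to showing that $(\D^4 h)_t |_K$ is Lipschitz with a constant independent of $t$.

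First, I would observe that the vector field $f(\cdot, I)$ in \eqref{EqP} is $C^\infty$ in the $v$ variables for every fixed value of $I$, and depends smoothly on $I$ as well. This follows from the same dominated convergence argument used in Lemma~\ref{Lemma: Gamma_p^i/ Inj Gamma_0^0}: each component of $f$ is an integral of $\sigma$ evaluated at a linear function of $v$, against the compactly supported probability density $P(r)\,d\theta\,dr/\pi$, and all $v$-derivatives of the integrand are dominated by integrable functions using boundedness of the $\sigma^{(k)}$. Crucially, in the Cartesian formulation \eqref{EqP} no singularity appears at $v_{1:2}=0$, unlike in the polar formulation \eqref{eq: Pre-Polaire}.

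Next, I would proceed by induction on $k \in \{1,2,3,4\}$ to show that $\D^k h(v,t)$ can be written in the form $G_k(v, I(t), \dot I(t), \ldots, I^{(k-1)}(t))$ for some $C^\infty$ function $G_k : \R^3 \times \R^{3k} \to \R$. Indeed, $\D^0 h(v) = v_0$ does not depend on $I$, and each application of $\D$ adds at most one order of time derivative of $I$ through the $\partial_t$ term in \eqref{eq: operator differentiel D}, while keeping smoothness in $v$ since $f$ is smooth in $v$ and smooth in $I$. Under Assumption~\ref{Strong Persistence} the tuple $(I(t), \dot I(t), \ddot I(t), \dddot I(t))$ lies in a fixed bounded set $B \subset \R^{12}$ for all $t \geq 0$. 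Consequently $\nabla_v G_4$ is a continuous function on the compact set $\overline{K} \times \overline{B}$, hence bounded by some $L_* > 0$. This yields the bound $|\D^4 h(u,t) - \D^4 h(u',t)| \leq L_* |u-u'|$ for all $u,u' \in K$ and all $t \geq 0$, and composing with $\Teinv$ concludes the argument.

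The main obstacle is essentially bookkeeping: confirming that no hidden singularity creeps in when iterating $\D$ four times. The key point here is to stick with the Cartesian representation of $f$ rather than the polar one, so that smoothness in $v$ is automatic and the $\nabla_v G_4$ bound follows by continuity on a compact set; otherwise one would have to argue that the apparent $1/|v_{1:2}|$ factors in the polar dynamics cancel, which would be significantly more delicate.
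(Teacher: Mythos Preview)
Your proposal is correct and follows essentially the same line as the paper's proof: both argue that $\Teinv$ is uniformly Lipschitz with bounded image (via Theorem~\ref{Theorem: inverse mapping}), and that $(\D^4 h)_t$ is smooth in $v$ with Jacobian bounded uniformly in $t$ because the time dependence enters only through the uniformly bounded quantities $I^{(j)}$. Your presentation is slightly more explicit in spelling out the inductive structure $G_k(v,I,\dots,I^{(k-1)})$ and in emphasizing the use of Cartesian rather than polar coordinates to avoid artificial singularities, but the substance is the same.
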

\begin{proof}
Observe that $(\D^4 h)_t$ is well-defined for $t\ge 0$, since $I\in C^3([0,+\infty),\R^3)$ by Assumption~\ref{Strong Persistence}.
Moreover, $(\D^4 h)_t$ is locally Lipschitz, uniformly with respect to time. Indeed, being the composition of smooth functions, $(\D^4 h)_t$ is locally Lipschitz (actually smooth) for any $t\ge 0$. The uniformity with respect to time follows by observing that the time dependent terms in the Jacobian matrix $\partial_v (\D^4 h)_t$ are linear combinations of $I_i^{(j)}$, $i\in\{0,1,2\}$ and $j\in \{0,1,2,3\}$, which  are bounded in time by Assumption~\ref{Strong Persistence}.

Observe that by Theorem~\ref{Theorem: inverse mapping} the map $\mathfrak{T}_t$ is globally Lipschitz, uniformly with respect to time, and has bounded image.
Therefore, $\D^4h\circ \mathfrak{T}_t$ is also globally Lipschitz continuous uniformly with respect to time.
\end{proof}

We now prove the well definiteness of the observer~\eqref{Observer 1}.

\begin{proof}[Proof of Proposition~\ref{prop:observer_defined}]
    Thanks to Lemma~\ref{Lemma:Non Linear Part Lipschitz} the first equation of \eqref{Observer 1} has globally defined solutions. The same is true for the second equation by Proposition~\ref{Solution defined Bounded}. 
    By Proposition~\ref{Proposition : Passage} we have that there exists at most two times $t_2>t_1\ge 0$ such that $y(t_1) = -\delta$ and $y(t_2)=\delta$. Hence, $\hat v$ is piece-wise continuous with at most two jump discontinuities.
    However, $\hat v$ is continuous at $t_1$. Indeed, in this case the initial condition after the switch of dynamics is $\hat v(t_1^+) = \mathfrak{T}_t(\hat z(t_1^-))$. This completes the proof.
\end{proof}

\begin{remark}
    Let $t_2$ be the time such that $y(t_2)=\delta$. In this case, when switching dynamics in the observer \eqref{Observer 1} we have $\hat z(t_2^+) = T_t(\hat v(t_2^-))$. This can induce a discontinuity in $\hat v$ since 
    \begin{equation}
        \hat v(t_2^+) = \mathfrak T_t (\hat z(t_2^+)) = \mathfrak T_t \circ  T_t(\hat v(t_2^-)).
    \end{equation}
    Indeed, it could happen that $|\hat v_{1:2}(t_2^-)| <\eta$, and thus Theorem~\ref{Theorem: inverse mapping} cannot be used to guarantee that $\mathfrak T_t\circ T_t(\hat v(t_2^-)) = \hat v(t_2^-)$.
\end{remark}

We are now in position to prove Theorem~\ref{Theorem Observer 1}. 
This proof follows the conventional proof of the convergence of high gain design (see \cite{khalil2014high}, \cite{gauthier_kupka_2001} or \cite{besanccon2007nonlinear} for example) with the necessary modifications to address the singularities associated with the set $\mathcal{Z}$.

\begin{proof}[Proof of Theorem~\ref{Theorem Observer 1}]
As $0<\delta<\delta^*$ and $0<\eta<R$, we proved in Proposition~\ref{prop:observer_defined} that, under these assumptions, solutions of \eqref{Observer 1} are well defined globally.

We start by proving Point~\eqref{it:main1}.
We suppose at first that the output verifies $y(t_0) \geq \delta$. From Proposition~\ref{Proposition : Passage} it follows that $y(t) \geq \delta$ for every time $t\geq0$

Since we are assuming $y(t)\ge \delta$, by Theorem~\ref{Theorem: inverse mapping} and by the fact that $z= T_t(v)$ and $v_0(t)=y(t)$, we have
\begin{equation}
    \label{eq:v-hat-v}
    |v-\hat v| \le |v - \Teinv(z)| + |\Teinv(z) - \Teinv(\hat{z}) | \leq \eta \chi(v) + M_0 |z - \hat{z}|,
\end{equation}
where $M_0>0$ is a uniform bound of the Lipschitz constant of $\mathfrak{T}_t$, which depends on $\eta$ and $\delta$.
We are thus left to estimate $|z-\hat{z}|$. We define $e := \hat z-z$ and consider for any $l>0$ the following Lyapounov function candidate
\begin{equation}
\|e\|_l^2 := e^{\top}P_le,
\end{equation}
where $P_l$ is the matrix with elements $P_{l}(i,j):= \frac{(-1)^{i+j}}{l^{i+j-1}} \cdot \frac{(i + j - 2)!}{(i - 1)!(j - 1)!}$, for $i,j\in\{1,2,3,4\}$.
From  Lemma~\ref{Lemma:Non Linear Part Lipschitz}, we can set $L>0$ to be an upper bound for the Lipschitz constant of $(\D^4 h)_t$ on the compact set $\mathcal{U}:=\{(u_0,u_1,u_2)\in \R^3, \delta \leq |u_0| \leq R \ \text{and} \ \eta \leq|u_{1:2}|\leq R^2\}$, which can be chosen to be uniform with respect to time $t\ge 0$. 
 
Then, since $y = Cz$, it holds
\begin{equation} \label{eq:Error pre gronwal}
    \begin{split}
       \frac{1}{2} \frac{d \|e\|_l ^2}{dt} 
       &=  e^{\top}P_l(A-P_l^{-1}C^{\top}C)e + e^\top P_l W\left( (\D^4 h) \bigl( \mathfrak{T}_t (\hat z) \bigr)- (\D^4 h)( v)\right), \\
       &= -\frac{l}{2} \|e\|_l^2 +  e^\top P_l W\left( (\D^4 h) \bigl( \mathfrak{T}_t (\hat z) \bigr)- (\D^4 h)( v)\right),\\
       &\leq  -\frac{l}{2} \|e\|_l^2 + \sqrt{\frac{\nu_{\max}}{\nu_{\min}}}L M_0 \|e\|_l^2   + \chi(v) L l^{-7/2}{\eta} \|e\|_l .
    \end{split}
\end{equation}
Here, on the second line we used the fact that $P_lA +   A^{\top} P_l - C^{\top}C = -l P_l$. The last line follows by Cauchy-Schwarz inequality and \eqref{eq:v-hat-v}, thanks to the estimates  
\begin{equation}
    \|W\|_l \leq l^{-7/2}\sqrt{\nu_{\max}}
    \qquad\text{and}\qquad 
    l^{-7} \nu_{\min} \operatorname{Id}_4 \le P_l \le l^{-1}\nu_{\max}\operatorname{Id}_4.
\end{equation}
Here, $\nu_{\max}$ and $\nu_{\min}$ are respectively the max and the min eigenvalue of $P_1$.

We denote $l^*:=2\sqrt{\frac{\nu_{\max}}{\nu_{\min}}}L M_0$, and $M:=M_0\frac{\max \{1,\sqrt{\nu_{\max}}\}}{\sqrt{\nu_{\min}}}$. Dividing both sides of \eqref{eq:Error pre gronwal} by $\|e\|_l$, we can apply a linear Grönwall's inequality to the variation of $\|e\|_l$. Since $\|e\|_l\geq \nu_{\min}l^{-7}|\hat z-z|$ and we have \eqref{eq:v-hat-v}, we obtain the following estimation in the immersed domain, for $t\in [t_0,t_f]$,
\begin{equation}  \label{eq:error z domain.} 
|v(t)-\hat{v}(t)| \leq \eta \chi(v(t))  +  M e^{-(l-l^*) t} \biggl( l^3|\hat{z}(t_0)-z(t_0
)| e^{(l-l^*) t_0} + \int_{t_0}^t e^{(l-l^*) s}L \eta \chi(v(s)) \,ds \biggr).
\end{equation}
Using that $z(t)=T_t(v(t))$ for all times $t \geq t_0$ yields the first part of the point.

We now deal with the error increase during the switch.
We recall that $t_1:=\inf \{t \ge 0: |y(t)|<\delta\}=\inf \mathcal{S}$ and $t_2=\sup \mathcal{S}$. We suppose now that $0<t_1<\infty$.
We recall that by Proposition~\ref{Proposition : Passage}, it holds $t_2\leq t_1 + t_\delta$, and  $y(t) \geq \delta$ for all times $t \geq t_2$.
If a switch occurs, the initial condition at $t_1$ for the second dynamics in \eqref{Observer 1} is $\hat v(t_1^+)=\mathfrak{T}_t(\hat{z}(t_1^-))$. The solution $\hat v$ is therefore well defined in the interval $[t_1,t_2]$ and continuous, since $f$ is uniformly Lipschitz and  $y(\cdot)$ is strictly increasing.

Let $L_1>0$ be the Lipschitz constant of $T_t$ on the compact set $\mathcal{U}$, which is uniform with respect to time thanks to Assumption~\ref{Strong Persistence} (see, \textit{e.g.}, proof of Lemma~\ref{Lemma:Non Linear Part Lipschitz}).
Then, since $\hat z(t_2^{-})=T_{t_2}(\hat v(t_2^{-}))$ and $z(t_2)=T_{t_2}(v(t_2))$, we have
\begin{equation}
    \label{eq:A1}
    |\hat{z}(t_2^-)-z(t_2^-)|\le L_1 |\hat{v}(t_2^-)-v(t_2^-)|.
\end{equation}
Letting $L_2>0$ be the Lipschitz constant of $f(\cdot,t)$ on the compact $\mathcal{U}$, which is uniform with respect to time (see, \textit{e.g.}, Proposition~\ref{Solution defined Bounded}), we have
\begin{equation}
    \label{eq:error passage}
    |\hat{v}(t_2^-)-v(t_2^-)| \le e^{(t_2-t_1) L_2} |\hat{v}(t_1)-v(t_1)| 
    \le e^{t_\delta L_2} |\hat{v}(t_1)-v(t_1)|.
\end{equation}
We have therefore characterised a bound of the error for all times $t \geq 0$ using both relations~\eqref{eq:error z domain.} and~\eqref{eq:error passage}.

Let us establish Point \eqref{it:main2} of the theorem. 
Under the assumptions of the statement, $\chi(v(t))=0$ for all times $t \ge 0$. 
Recalling that $\mathcal S=[t_1,t_2]$, we will distinguish two cases: $t_1=0$ or $t_1>0$.
In the first case, by the second part of Point~\eqref{it:main1}, for any $t\le t_2$ we have
\begin{equation}
    |\hat v(t)-v(t)|\le  e^{L_2t_{\delta}}|\hat v(0)-v(0)| \le e^{\left(L_2+(l-l^*)\right)t_{\delta}} e^{-(l-l^*) t}|\hat v(0)-v(0)| .
\end{equation}
Here we used that $e^{-(l-l^*) t_1} \leq e^{-(l-l^*) t} e^{(l-l^*)t_{\delta}}$ as $t \le t_2$. Letting now $t\ge t_2$ and using the first part of Point~\eqref{it:main1} with $t_0=t_2$ we get
\begin{equation}
    |\hat v(t)-v(t)|\le M l^3 e^{\left(L_2+(l-l^*)\right)t_{\delta}} e^{-(l-l^*)t}|\hat v(0)-v(0)|.
\end{equation}
Hence, the statement stands proved in this case, with $k=M l^3 e^{\left(L_2+(l-l^*)\right)t_{\delta}}$.
The argument for the case $t_1>0$ is similar, except that one has to start by applying the first part of Point~\eqref{it:main1} to $t\in [0,t_1]$, then plug this estimate in the error during the switching time for $t\in [t_1,t_2]$, and finally reapply the first part of Point~\eqref{it:main1} to obtain the estimate for $t\ge t_2$.

We finally provide an argument for Point~\eqref{it:main3}. We start by observing that, since $v(0)\in B_{\mathbb{R}^3}(0,R)$ and $\hat v$ is initialized in the compact set $\mathcal K$, it holds $|\hat z(0)-z(0)|\le L_1 (\operatorname{diam}(\mathcal{K}) + R)$. Then, one proceeds exactly as for the previous point except that instead of having $\chi(v(t))=0$ for all $t>0$, one uses the trivial bound $\chi(v(t))\le 1$. 
This leads to the following estimates:
\begin{equation}
   e^{-(l-l^*)t} \int_{0}^{t} e^{(l-l^*)s}\chi(v(s))\,ds \le \frac{1}{l-l^*}
   \quad\text{and}\quad
   e^{-(l-l^*)t} \int_{t_2}^{t} e^{(l-l^*)s}\chi(v(s))\,ds \le \frac{1}{l-l^*}
\end{equation}
We assume $\delta$ to be sufficiently small, to be fixed later, so that $t_\delta<t^*$.
In the case where the first switching time $t_1$ is positive, letting $C>0$ be a constant independent of $\delta$, $\eta$, and $l$ one then obtains that for any $t\in [t^*,t_2]\cup (t_2+t^*,+\infty)$ it holds
\begin{equation}
    \label{eq:dario10}
    \frac{1}{C}|\hat v(t)-v(t)|\le \eta\left(1 + M l^3 e^{L_2t_\delta}e^{-(l-l^*)t^*} \left(1+\frac{M}{l-l^*}\right) + \frac{M}{l-l^*} \right) + M^2 l^6  e^{(L_2+(l-l^*))t_\delta} e^{-(l-l^*)t}.
\end{equation}
Here, one has to be careful due to the fact that $M$ depends on $\eta$, and in particular $M\rightarrow +\infty$ as $\eta\downarrow 0$.
One starts by fixing $\eta<\varepsilon/(4C)$. Then, with $\eta$ fixed, one chooses $l$ sufficiently large so that
\begin{equation}
    M^2 l^6 e^{-(l-l^*)t} \le \frac{\varepsilon}{4C}.
\end{equation}

Assuming that $l-l^*\geq 1$, this also implies that $M/(l-l^*)\leq 1$, so that \eqref{eq:dario10} implies
\begin{equation}
    \label{eq:dario10b}
    |\hat v(t)-v(t)|\le \frac{\varepsilon}{4}\left(2 +  {l^{-3}}e^{L_2 t_\delta}\frac{\varepsilon}{2C}\right) +\frac{\varepsilon}{4}  e^{(L_2+(l-l^*))t_\delta}.
\end{equation}
Then, assuming $l$ is large enough so that $l^{-3}\varepsilon/2C<1$, yields

\begin{equation}
    |\hat v(t)-v(t)|\le {\varepsilon}\left( \frac12+ \frac{e^{(L_2+(l-l^*))t_\delta}}2 \right).
\end{equation}
Finally, since $t_\delta$ tends to $0$ as $\delta$ tends to $0$, we can fix $\delta>0$ such that $e^{(L_2+(l-l^*))t_\delta} \le 1$. This completes the proof in this case.

The case where $t_1=0$ is treated similarly. Here, for any time $t<t_2$ the second part of Point~\eqref{it:main1} only yields a uniform constant bound. However, for any $t>t^*>t_2$, we obtain
\begin{equation}
    \frac{1}{C} |\hat v(t)-v(t)| \le \eta \left( 1 + \frac{M}{l-l^*}\right) + M l^3 e^{(L_2+(l-l^*))t_\delta}e^{-(l-l^*)t}.
\end{equation}
Here, $C>0$ can be taken to be the same constant as in \eqref{eq:dario10}.
It is then easy to see that the choice of $\eta$, $l$, and $\delta$, considered in the case $t_1>0$ imply the statement also in this case. 
\end{proof}

\section{Numerical Simulations} \label{Section:Numerical Simulations}

\begin{figure}[t]
    \centering
\begin{minipage}[c]{.49\textwidth}
\includegraphics[width=\linewidth]{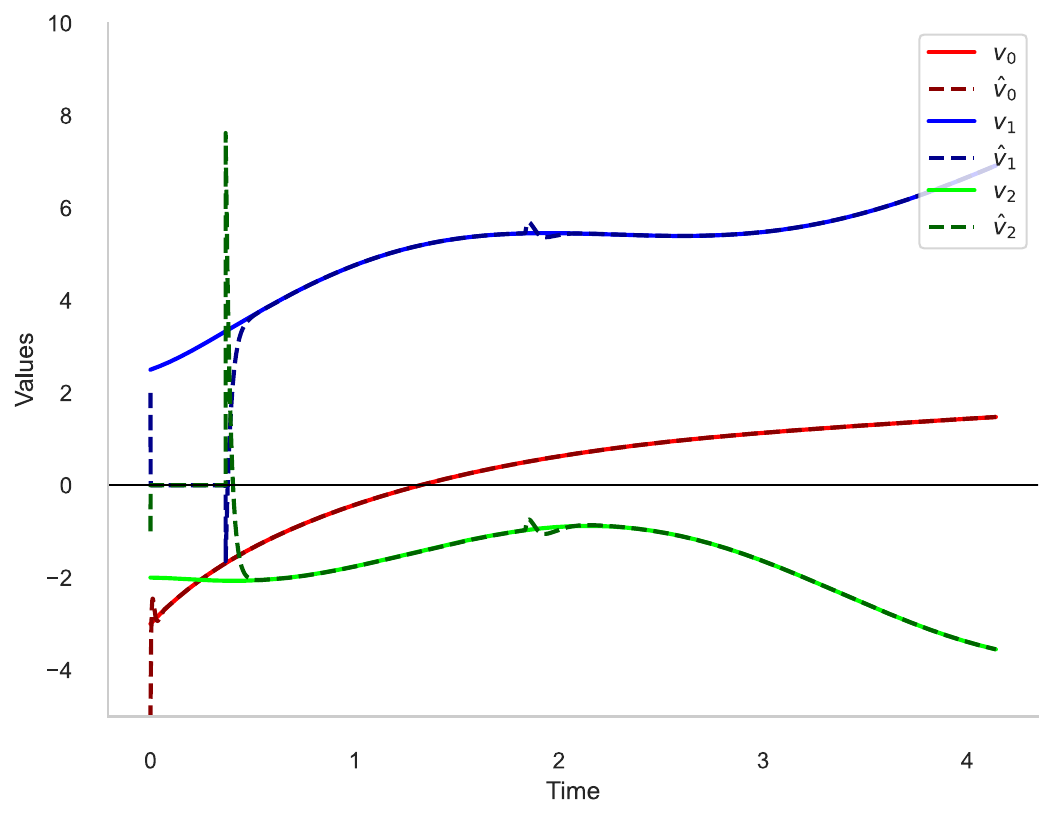}
\end{minipage}
\hfill
\begin{minipage}[c]{.49\textwidth}
\includegraphics[width=\linewidth]{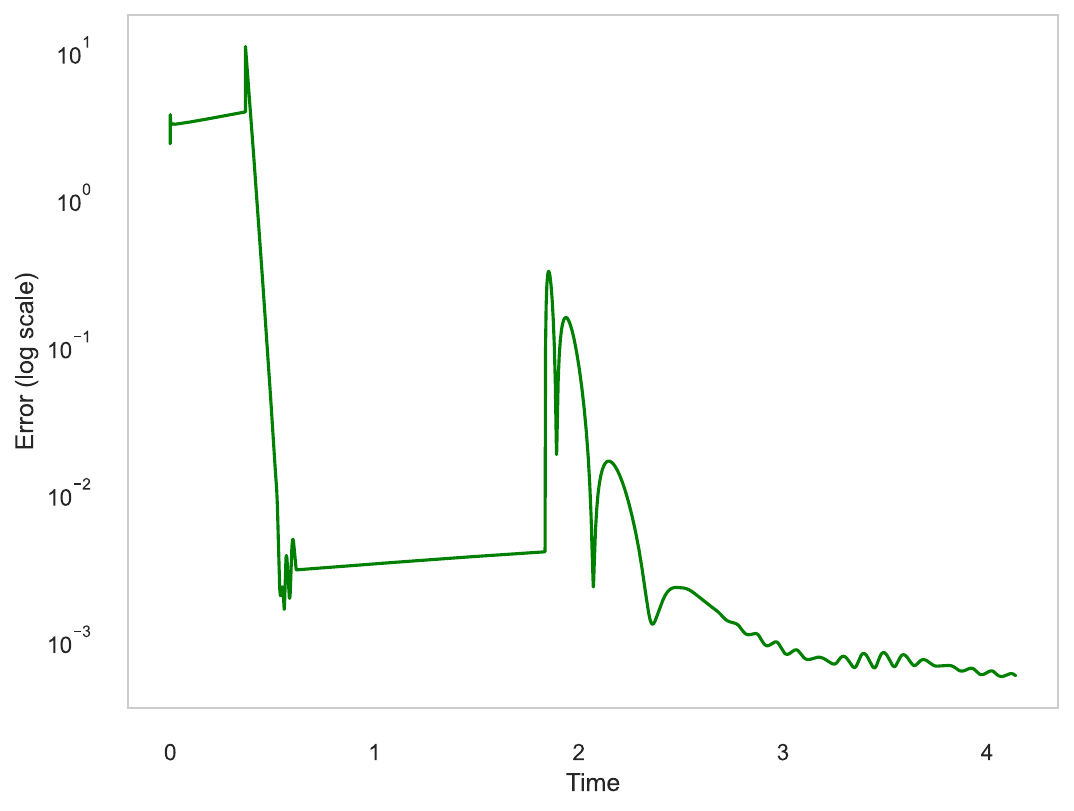}
\end{minipage}
        \caption{Left-hand side: components of the simulated trajectories of System~\eqref{EqP} (plain lines) and Observer~\eqref{Observer 1} (dashed lines). Right-hand side: plot of the estimation log-error.
        This particular solution satisfies point \eqref{it:main2} of the theorem.
        The switching time occur at $t_1\simeq 1.2$ and 
        $t_2\simeq 1.8$.
        Peaking phenomena occur at the start of the simulation and at time $t_2$, when the high-gain observer restarts; they are better seen on the log-error plot.
        Since the error peaking at $t_2$ is proportional to the error at $t_1$, it is much smaller in size. At the beginning of the trajectory, $\hat v_1,\hat v_2$ are set to 0 due to the cut-off \eqref{E:sat}, until the observer $\hat z$ of the embedded system has sufficiently converged. At time $t_1$, the observer is turned off and the trajectories of $v$ and $\hat v$ evolve according to the neural fields dynamics, resulting in a drift of the error over $[t_1,t_2]$.}
        \label{fig:Observer Solutions}
\end{figure}

We propose a numerical simulation for the observer \eqref{Observer 1}. The parameters of the dynamical system~\eqref{EqP} were chosen as such: $J_0=-1$, $J_1=1.5$, $\sigma=\tanh(\mu\cdot- h_0)$ with the non linear gain $\mu=10$ and the threshold $h_0=1$. The distribution $P$ has been chosen as the Dirac mass $P=\delta_{r=1}$ for theoretical purposes.
The input has been chosen as $I_0(t)=\varepsilon(1-\beta)$, $I_{1:2}(t)=\beta \varepsilon \bigl( \cos (\frac{2\pi}{10} t), \sin( \frac{2\pi}{10} t) \bigr)$, with $\beta=10^{-1}$ and $\varepsilon=10^{-1}$. The initial condition was taken as $v(0)=(-3,2.5,-2)$ so that the output verifies $y(0)<-\delta$ to show the switching mechanism.
The timescale has been chosen as $\tau=5$ for theoretical purposes to better show the error increase during the switch. 

The initial condition of the observer was taken as $\hat{v}(0)=(-5,2,-1)$, $\hat{z}(0)=T_0(\hat{v}(0))$. As $J_0<0$, we chose $\delta=3*10^{-1}$, which is greater then the robust bound of Proposition~\ref{Proposition : Passage} (See the remark thereafter). The inversion error was taken $\eta=10^{-3}$, the high gain $l=15$. The numerical scheme chosen was an explicit RK4 with a step of $10^{-5}$.
The resulting trajectory $\hat v$ is shown in Figure~\ref{fig:Observer Solutions} for $t \in [0,4]$.

\appendix

\section{Appendix: Modelisation adjustments} \label{Appendix:Modelisation adjustement.}

Some authors favor a neural field model for activity instead of post-membrane potential, \textit{e.g.}, \cite{ben1995theory,blumenfeld2006neural}. The model is similar to \eqref{GeneralEquation}:
\begin{equation} \label{Activity Model}
    \tau \partial_{t}A(x,t)= \ -A(x,t) \ +  \  \sigma\bigl( J \cdot A(\cdot,t) \  +  I_{\text{ext}}(t) \bigr).
\end{equation}
Supposing $I_{\text{ext}}$ smooth enough, we can easily go from \eqref{Activity Model} to \eqref{GeneralEquation} by considering the change of variable: $V=J\cdot A + I_{\text{ext}},$  and replacing $I_{\text{ext}}$ with $ I_{\text{ext}} + \dot{I}_{\text{ext}}$. We favor \eqref{GeneralEquation} because it appears easier to study from a mathematical point of view.
If our measurement is the averaged neural activity $a_0=\int_{\Omega}A(u) du$, the change of variable $V=J\cdot A + I_{\text{ext}}$ leads to $\int_{\Omega}V(u) du=\langle A,J\cdot1\rangle + \langle I_{\text{ext}},1\rangle=J_0 a_0 + I_0.$
Therefore knowing the parameters $J_0$ and $I_0$, we can retrieve the mean membrane potential $v_0$ based on measurement not done in voltage but in spark per second (activity).

We also have some freedom in changing the shape of the nonlinearity. Some model choose sigmoidal functions $\sigma_+:\R\to \R$ that are strictly positive, strictly increasing (i.e., $\sigma_+'>0$), convex-concave (i.e.,  $x\sigma_+''(x)\leq 0$ for all $x\in \R$), and such that
\[
    \lim\limits_{\substack{x \to -\infty}} \sigma_+(x)=  0, \quad \lim\limits_{\substack{x \to +\infty}} \sigma_+(x)=  1, \quad \sigma_+'>0, \quad \max_{\R} \sigma_+'=\sigma_+'(0).
\]
In this case there exist two parameters $s_1>0$ and $s_2>0$ such that
$\sigma_+=s_1\sigma+s_2$ where $\sigma$ satisfies Assumption~\ref{ass:sigma}. Therefore, it suffices to replace $I_0$ by $I_0 + s_2$ and $J_i$ by $J_i s_1$ for $i\in \{0,1\}$.

On the other hand, if there is a threshold in the sigmoid function, then there exists a constant $h_0$ such that $\sigma (\cdot)$ is replaced by $ \sigma (\cdot -h_0)$. Hence, we can always go back to our main equation by taking the change of variable from $v_0$ to $v_0-h_0$. Replacing then $I_0$ with $I_0 + h_0$ maintains the performed analysis.

\printbibliography

\end{document}